\documentclass[12pt]{amsart}
\usepackage[active]{srcltx}
\usepackage{latexsym}
\usepackage{hyperref}

\addtolength{\headheight}{1.14998pt}

\addtolength{\oddsidemargin}{-.5cm}
\addtolength{\evensidemargin}{-1.5cm}
\addtolength{\textwidth}{2cm}
\usepackage{amsmath}
\newcommand{\ignore}[1]{}

\newcommand{\hide}[1]{}




\DeclareMathOperator{\ad}{ad}

\newcommand{\F}{\mathbb F}


\newtheorem{dummy}{Dummy}


\newtheorem{lemma}[dummy]{Lemma}

\newtheorem{theorem}[dummy]{Theorem}

\newtheorem{prop}[dummy]{Proposition}
\newtheorem{cor}[dummy]{Corollary}

\theoremstyle{definition}
\newtheorem{definition}[dummy]{Definition}

\newtheorem{convention}[dummy]{Convention}

\theoremstyle{remark}
\newtheorem{rem}[dummy]{Remark}
\newtheorem*{rem*}{Remark to ourselves}

\hyphenation{co-boun-da-ries}


\begin{document}

\bibliographystyle{amsalpha}

\author{Marina Avitabile}
\email{marina.avitabile@unimib.it}
\address{Dipartimento di Matematica e Applicazioni\\
  Universit\`a degli Studi di Milano - Bicocca\\
 via Cozzi 55\\
  I-20125 Milano\\
  Italy}
\author{Sandro Mattarei}
\email{smattarei@lincoln.ac.uk}
\address{Charlotte Scott Centre for Algebra\\
University of Lincoln \\
Brayford Pool
Lincoln, LN6 7TS\\
United Kingdom}


\title[The earliest diamond of finite type]{The earliest diamond of finite type in Nottingham algebras}

\subjclass[2020]{Primary 17B50; secondary 17B70, 17B65}
\keywords{Modular Lie algebra, graded Lie algebra, thin Lie algebra}

\begin{abstract}
We prove several structural results on {\em Nottingham algebras,} a class of infinite-dimensional, modular, graded Lie algebras,
which includes the graded Lie algebra associated to the Nottingham group with respect to its lower central series.
Homogeneous components of a Nottingham algebra have dimension one or two, and in the latter case they are called {\em diamonds}.
The first diamond occurs in degree $1$, and the second occurs in degree $q$, a power of the characteristic.
Each diamond past the second is assigned a {\em type,} which either belongs to the underlying field or is $\infty$.

Nottingham algebras with a variety of diamond patterns are known.
In particular, some have diamonds of both finite and infinite type.
We prove that each of those known examples is uniquely determined by a certain finite-dimensional quotient.
Finally, we determine how many diamonds of type $\infty$ may precede the earliest diamond of finite type in an arbitrary Nottingham algebra.
\end{abstract}

\maketitle
\section{Introduction}\label{sec:intro}


A {\em thin} Lie algebra is a graded Lie algebra $L=\bigoplus_{i=1}^{\infty}L_i$
with $\dim L_1=2$ and satisfying the following {\em covering property:}
for each $i$, each nonzero $z\in L_i$ one has $[zL_1]=L_{i+1}$.
(Note that we write Lie products without a comma.)
This implies at once that homogeneous components of a thin Lie algebra are at most two-dimensional.
Those components of dimension two are called {\em diamonds,}
hence $L_1$ is a diamond, and if there are no other diamonds then $L$ is a {\em graded Lie algebra of maximal class}~\cite{CMN,CN}.
It is convenient, however, to explicitly exclude graded Lie algebras of maximal class from the definition of thin Lie algebras.
Thus, a thin Lie algebra must have at least one further diamond besides $L_1$ (which we may call the {\em first} diamond of $L$),
and we let $L_k$ be the earliest such diamond (the {\em second} diamond).
For the sake of simplicity in this introduction we assume all thin Lie algebras to have infinite dimension.


The most basic invariant of a thin Lie algebra is the degree $k$ of the second diamond.
It is known from~\cite{CMNS} and~\cite{AviJur}
that if the characteristic $p$ is different from $2$ then $k$ can
only be one of $3$, $5$, $q$, or $2q-1$,
where $q$ is a power of $p$.

In this paper we focus on thin Lie algebras with second diamond $L_q$.
One remarkable example of such thin Lie algebras arises as the graded Lie algebra associated to the lower central series
of the {\em Nottingham group} over the prime field $\F_p$, for $p$ odd~\cite{Car:Nottingham}.
That algebra has its second diamond in degree $p$, but admits a natural generalization with a power $q$ of $p$ in place of $p$.
For this reason thin Lie algebras with second diamond $L_q$ have been called {\em Nottingham algebras} in the literature.
However, because of exceptional behaviour in small characteristics here we reserve the name {\em Nottingham algebras} to thin Lie algebras
of characteristic $p>3$, having second diamond $L_q$ with $q>5$.

A wide variety of Nottingham algebras are known.
Several arise from certain cyclic gradings of various simple Lie algebras of Cartan type.
In particular, the thin Lie algebra associated with the Nottingham group arises from a cyclic grading of the {\em Witt algebra}.
Further Nottingham algebras, and in fact uncountably many ones, are closely related to graded Lie algebras of maximal class.
We refer the reader to Theorem~\ref{thm:k=q} and the discussion which follows it for a comprehensive survey.

In an arbitrary Nottingham algebra
each diamond past the first can be attached a {\em type},
which is an element of the underlying field, or  $\infty$.
The second diamond $L_q$ has invariably type $-1$, and we assign no type to the first diamond $L_1$.
The type of a diamond $L_m$ describes the adjoint action of $L_1$ on $L_m$, in such a way that knowledge of
all degrees in which diamonds occur in $L$, and their types, determines $L$ up to isomorphism.
It is necessary to include {\em fake diamonds} in such a description.
Those are in fact one-dimensional components, as we explain in Section~\ref{sec:types}, and correspond to types $0$ and $1$.
We refer to diamonds which are not fake, and thus are two-dimensional, as {\em genuine} diamonds.
Furthermore, the difference in degree of any two
consecutive diamonds equals $q-1$, see Section~\ref{sec:main}.

Various patterns of diamond types occur in Nottingham algebras.
One possible pattern has all diamonds of infinite type, with the necessary exception of the first two.
Such algebras were constructed in~\cite{Young:thesis},
they have second diamond $L_q$, of type $-1$,
and diamonds of infinite type in each higher degree
congruent to $1$ modulo $q-1$. A complementary uniqueness result is proved in~\cite{AviMat:diamond_distances}, where
is showed that there exists a unique Nottingham algebra
with second diamond in degree $q$ and all other diamonds having infinite type.

Another possible pattern sees all diamond types follow an arithmetic progression in the underlying field.
A special case of that arises for the algebra associated with the Nottingham group, where the sequence of types is constant.
More generally, there are Nottingham algebras having bunches of diamonds of infinite type, separated by single occurrences of diamonds of finite type,
where the finite types again follow an arithmetic progression.
We give a more detailed description in Theorem~\ref{thm:k=q},
which is an existence result for Nottingham algebras with those diamond patterns
and summarizes conclusions of several papers~\cite{Car:Nottingham,Avi,AviMat:A-Z,AviMat:mixed_types}.
Existence of the Nottingham algebras with all diamonds, but the second one, of infinite type also follows from~\cite{AviMat:A-Z}
as a limit case.

Uniqueness results for the algebras of Theorem~\ref{thm:k=q} where all diamonds have finite types were established in~\cite{CaMa:Nottingham},
in the sense that each of them is uniquely determined by an appropriate finite-dimensional quotient.
This was done by providing finite presentations for those algebras, or some central (or second central) extensions of them.
One of our goals is extending this to a uniqueness proof for the remaining algebras of Theorem~\ref{thm:k=q},
namely those with diamonds of both finite and infinite types.

\begin{theorem}\label{thm:presentation}
There is a unique infinite-dimensional Nottingham algebra $L$ with second diamond in degree $q$,
with diamonds of infinite type in all
degrees $k(q-1)+1$ for $1<k\leq p^s$, where $s\geq 1$, and a diamond of finite type
$\lambda \neq 0$ in degree
$(p^s+1)(q-1)+1$.
Furthermore, there is a central extension of $L$ which is finitely presented.
\end{theorem}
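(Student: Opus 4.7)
The plan is to follow the strategy used in~\cite{CaMa:Nottingham} for the all-finite-type case, adapting it to the present mixed pattern in which $p^s-1$ diamonds of infinite type precede a single genuine diamond of finite type $\lambda$. First I fix a generating pair $x,y\in L_1$, with $x$ chosen so that $\ad x$ plays the role of the ``maximal-class'' derivation, and use the covering property to define inductively homogeneous elements $e_i\in L_i$ by $e_{i+1}=[e_i x]$, together with auxiliary elements $f_m$ spanning each genuine diamond $L_m$ alongside $e_m$. The type of each diamond translates into explicit structure equations involving $[f_m x]$, $[e_m y]$, and lower-degree products: infinite type forces $[e_m y]$ to vanish, after absorbing a scalar into the choice of $f_m$, while the finite-type diamond in degree $(p^s+1)(q-1)+1$ contributes the equation carrying the parameter $\lambda$. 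Carrying this out up to, and slightly beyond, the finite-type diamond produces a finite list $R$ of structure relations satisfied in every Nottingham algebra meeting the hypotheses.

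The uniqueness claim then reduces to showing that the graded Lie algebra presented by $x,y$ modulo $R$ together with the covering-property relations admits at most one infinite-dimensional thin continuation compatible with the prescribed diamond pattern. My plan is to argue by induction on the degree $n$, showing that in each degree beyond the finite-type diamond the Jacobi identity applied to triples of the form $(e_{n-q},u,v)$ with $u,v\in L_1$ pins down the higher brackets uniquely in terms of those already computed. Existence of a Nottingham algebra with the given pattern is already guaranteed by Theorem~\ref{thm:k=q}, so only uniqueness requires proof. The main obstacle is the book-keeping at the transition point where the stretch of infinite-type diamonds ends and the finite-type diamond begins: the explicit formulas for brackets in that narrow range depend in a subtle way on $s$ and on $\lambda$, and the Jacobi-based propagation must be checked to go through uniformly for arbitrary $\lambda\neq 0$ and arbitrary $s\geq 1$.

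Finally, for the finite-presentation statement I anticipate that $L$ itself is \emph{not} finitely presented, since any direct presentation would require one additional defining relation for each genuine diamond past the finite-type one. To circumvent this, I would pass to a one-dimensional central extension $\widetilde L$ in which a carefully chosen relation of the form $[\cdot\,\cdot]=0$ valid in $L$ is replaced by a relation $[\cdot\,\cdot]=z$ introducing a new central generator $z$ in a prescribed degree. The point of the construction is that, in the presence of $z$, the infinite family of higher-degree vanishing relations needed to cut out $L$ from the free Lie algebra collapses into consequences of finitely many relations in $\widetilde L$. Verifying this collapse is the most technical step; I would carry it out by explicitly computing the Hilbert series of the algebra presented by $x,y$ modulo the candidate finite relation set and matching it against that of $\widetilde L$, using the diamond-spacing $q-1$ recalled in the introduction. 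Choosing the correct degree for the central generator $z$, so that the would-be infinite family of relations is indeed absorbed, is the place where I expect the argument to be most delicate.
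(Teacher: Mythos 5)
Your overall architecture --- existence via the known constructions, uniqueness via a finite presentation of a central extension whose quotient by the centre is forced to be the desired thin algebra --- matches the paper's strategy (Theorem~\ref{thm:fin_pres}: one presents $N$ by finitely many explicit relations and proves $N/Z(N)$ is a Nottingham algebra with the full diamond pattern). However, there are concrete gaps in how you propose to execute it. First, your translation of ``infinite type'' is wrong: for a diamond $L_m$ of infinite type spanned by $[wx]$ and $[wy]$ with $w\in L_{m-1}$, the defining relation is $[wyx]=-[wxy]$ (Definition~\ref{def:type}); the element $[wxy]$ does \emph{not} vanish and cannot be made to vanish by rescaling, so the structure equations you would feed into your relation set $R$ are incorrect at the outset. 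Second, and more seriously, you treat the diamond pattern beyond the first finite-type diamond as ``prescribed,'' but the theorem's hypotheses only fix the diamonds up to degree $(p^s+1)(q-1)+1$; the entire point of the uniqueness proof is to \emph{derive} the locations and types of all later diamonds --- infinite type except in degrees $(rp^s+1)(q-1)+1$, where the finite types follow the arithmetic progression $\mu_r=r(\lambda+1)-1$, with fake diamonds appearing when $\lambda\in\F_p$. Your proposed propagation step, ``Jacobi applied to triples $(e_{n-q},u,v)$ with $u,v\in L_1$,'' is nowhere near strong enough: the paper needs the adjoint actions of the elements $v_1$ and $v_2$ on components near the diamonds (Lemmas~\ref{lemma:v1}--\ref{lemma:v2ext}), identities such as $[v_{a-p^t}[v_{k+p^t}yx]]=-[v_{a-p^t}[v_{k+p^t}xy]]$ with binomial coefficients evaluated by Lucas' theorem, and the imported chain result (Theorem~\ref{thm:chain_recap}) guaranteeing that $y$ centralizes the long stretches between diamonds. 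None of this is recoverable from degree-$1$ Jacobi manipulations alone.

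Third, your plan for the finitely presented central extension is not workable as stated. The paper does not build $L$ first and then adjoin a central generator $z$ by deforming one relation; it defines the finitely presented algebra $N$ directly and proves $N/Z(N)=L$ by nested inductions (on the period index $r$, on the position $k$ within a period, and on the degree within a chain), never computing a Hilbert series. Determining the Hilbert series of a finitely presented graded Lie algebra is not an effective procedure you can simply ``carry out,'' and matching it against that of a hypothetical $\widetilde L$ presupposes the very structure you are trying to establish. If you want to salvage your plan, replace the Hilbert-series step with the direct inductive verification that every homogeneous relation of $L$ in each degree follows, modulo the centre, from the finite relation set --- which is exactly where the technical weight of the paper's Section~\ref{sec:fin_pres} lies.
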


In Section~\ref{sec:fin_pres} we prove Theorem~\ref{thm:fin_pres}, which is
a more precise version of Theorem~\ref{thm:presentation}.
We briefly discuss the exceptional case $\lambda=0$ in Remark~\ref{rem:lambda=0}.

We give a further contribution to the classification
project for Nottingham algebras
with the following result,
which determines the possible degree of the earliest diamond of finite type
of a Nottingham algebra after a first bunch of diamonds of infinite type.

\begin{theorem}\label{thm:main_intro}
Let $L$ be an infinite-dimensional Nottingham algebra
with second diamond in degree $q$.
Suppose $L_{2q-1}$ is a diamond of infinite type,
and suppose that the next diamond of finite type, that is, the earliest diamond of finite type past $L_{q}$,
occurs in degree $a(q-1)+1$, and has type $\mu\in \F$.
Then either $a-1$ equals a power of $p$, or $a$ equals twice a power of $p$ and $\mu=1$.
\end{theorem}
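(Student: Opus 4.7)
The plan is to work in a basis of $L$ produced by the covering property. After fixing generators $x,y \in L_1$, each one-dimensional component $L_i$ has a distinguished spanning monomial in $x$ and $y$, and at each diamond degree $j(q-1)+1$ the covering property supplies a second basis element. The technical machinery developed for Nottingham algebras then consists of computing the structure constants in this basis by iterated use of the Jacobi identity; these constants are essentially determined by the types of the diamonds already traversed. I would first assemble this machinery up through degree $a(q-1)+1$.

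Next I would encode the hypotheses that $L_q$ has type $-1$ and that the intervening diamonds $L_{j(q-1)+1}$, for $2 \le j \le a-1$, are all of infinite type. Infinite type imposes the most degenerate behavior on the relevant brackets, producing a compact explicit recursion for the structure constants that accumulate along the sequence (typically as products of binomial coefficients or falling factorials read modulo $p$). Imposing then that $L_{a(q-1)+1}$ is a diamond of finite type $\mu$, and expanding a suitable Jacobi identity across the whole run of infinite-type diamonds, should yield a defining equation for the pair $(a,\mu)$ whose left-hand side is a combinatorial expression in $a$ modulo $p$.

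I expect that equation to force a family of binomial coefficients $\binom{a-1}{j}$ to vanish modulo $p$ in every position strictly between the endpoints, which by a Lucas-type argument yields $a - 1 = p^s$ for some $s \ge 0$. The exceptional possibility arises from a single middle term that can cancel a boundary term of the sum; this produces the alternative $a = 2 p^s$, and a subsequent consistency check along the same Jacobi expansion should pin $\mu$ down to the fake-diamond value $\mu = 1$ that was identified in the introduction. The main obstacle I anticipate is the combinatorial bookkeeping: propagating the structure constants consistently across the long run of infinite-type diamonds, and then extracting the exact congruence on $a$ from the terminating Jacobi identity, is delicate. Handling the boundary case $\mu = 1$, in which the alleged diamond is in fact fake, will require extra attention to distinguish a genuine degeneration of the structure from a mere reindexing of the diamond sequence.
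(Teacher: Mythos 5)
Your high-level setup (distinguished monomial bases, structure constants determined by the diamond types, generalized Jacobi identity, Lucas' theorem) matches the paper's toolkit, but the core of your plan --- that a single Jacobi identity ``expanded across the whole run of infinite-type diamonds'' yields one combinatorial equation forcing $\binom{a-1}{j}\equiv 0 \pmod p$ for all intermediate $j$ --- does not reflect how the constraint on $a$ actually arises, and I do not see how it could be made to work. The hypotheses only describe $L$ up to degree $a(q-1)+1$, so any identity you write down there can at best give a single congruence (the paper gets $a$ even, and then $a\equiv 1 \pmod p$ or $a\equiv 0\pmod p$ with $\mu=1$, from identities such as $[v_{a-1}[v_1xy]]=-[v_1xyv_{a-1}]$ and $[v_2xy[v_{a-1}x]]=[v_{a-1}x[v_2yx]]$, using $v_k=[v_2v_1^{k-2}]$). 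To refine $a-1=mp^s$ down to $m=1$ one must first \emph{prove} additional structure beyond the finite-type diamond --- namely that $L_{(a+k)(q-1)+1}$ is a diamond of infinite type for $1\le k\le p^s$, itself a delicate induction with sub-cases on $k \bmod p$ --- and only then derive a contradiction in degree roughly $(a+p^s)(q-1)$. Your proposal has no step playing this role.

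Moreover, the final contradiction in the main case is not obtained from a long Jacobi expansion at all: the paper sets $u=[v_bxyx^{(q-3)/2}]$, an element of exactly half the target degree, and exploits the alternating identity $[uu]=0$, whose expansion collapses (via Lucas) to $[v_{a+p^s}yx]=0$, killing the algebra unless $m=1$. This half-degree anticommutativity trick is the essential idea your sketch is missing, and nothing in your plan substitutes for it. Finally, the exceptional branch $a=2p^s$, $\mu=1$ is not ``a single middle term cancelling a boundary term'' in one sum; it is a structurally separate case in which $[L_{(a+1)(q-1)},y]=0$, the diamond following the finite-type one is fake and shifted by one degree, $v_{a+1}$ must be redefined, and a parallel induction and a separate terminating identity ($0=[v_{p^s}x[v_ay]]$) are needed. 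As written, your proposal is a plausible outline of the bookkeeping but omits the two ideas that make the theorem true.
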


Note that according to the fact that the difference in degree of any two consecutive diamonds equals $q-1$,
the degree of the earliest diamond of finite type past $L_{q}$ must have the form $a(q-1)+1$
for some integer $a$.
Theorem~\ref{thm:main_intro} will follow at once from a more precise and technical analogue where $L$ is allowed to be finite-dimensional, Theorem~\ref{thm:main}.
Of the two alternate conclusions on $a$ stated in Theorem~\ref{thm:main_intro},
the former occurs for the Nottingham algebras of Theorem~\ref{thm:presentation},
and the latter occurs for some of the Nottingham algebras studied by David Young in his PhD thesis~\cite{Young:thesis}.
We discuss some of those briefly at the end of Section~\ref{sec:types}
We now explain how our results complete
a piece of a classification of Nottingham algebras.
Thus, consider an infinite-dimensional Nottingham algebra $L$, with second diamond $L_{q}$,
and make the additional assumption $p>5$ (this restriction being inherited from~\cite{CaMa:Nottingham}).
Suppose that $L$ has at least one further diamond of finite type past $L_q$,
let $L_{m}$ be the earliest,
and assume that $L_{m}$ is genuine.

Then either $m=2q-1$, meaning there are no diamonds of infinite type before $L_m$,
or $m=(p^s+1)(q-1)+1$ according to Theorem~\ref{thm:main_intro}, for some $s\geq 1$.
In the former case $L$ is isomorphic to one of the algebras described
in~\cite{Car:Nottingham} and~\cite[Theorems~2.3 and~2.4]{CaMa:Nottingham}.
In particular, $L$ has diamonds of finite type in each degree congruent to $1$ modulo $q-1$,
with the diamond types following an arithmetic progression
(including the fake ones).
In the latter case $L$ is isomorphic to one of the algebras described in
Theorem~\ref{thm:presentation}.
In particular, according to its full description recalled in Theorem~\ref{thm:k=q},
the algebra $L$ has diamonds in all degrees
of the form $t(q-1)+1$.
Those diamonds have infinite type except for
$t\equiv 1 \pmod {p^s}$, where the diamonds have
finite types following an arithmetic progression.
Allowing the parameter $s$ to be zero we obtain the following uniform description of the structure of the
Nottingham algebras under consideration.

\begin{theorem}\label{thm:3}
Let $L$ be an infinite-dimensional Nottingham algebra in characteristic $p>5$,
with second diamond in degree $q$.
Suppose $L$ has at least one genuine diamond of finite type past $L_{q}$,
let $L_{m}$ be the earliest,
and assume that $L_{m}$ has type $\lambda\neq 0,1$.
Then $m=(p^s+1)(q-1)+1$ for some $s\geq 0$.

Furthermore, $L$ has diamonds in all degrees congruent to $1$ modulo $q-1$.
The diamond $L_{t(q-1)+1}$ has finite type (possibly fake) if $t\equiv 1 \pmod {p^s}$, and infinite type otherwise.
The diamonds of finite type (which include fake diamonds if $\lambda \in \F_{p}$) follow an arithmetic progression.
Furthermore, $L$ itself (if $\lambda \not \in \F_{p}$ and $s=0$) or a central (if $\lambda \in \F_{p}$,
$\lambda \neq -2$ and $s=0$ or $\lambda \in \F$ and $s\geq 1$) or second central (if $\lambda =-2$) extension of $L$ is finitely presented.
\end{theorem}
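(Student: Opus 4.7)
The plan is to derive Theorem~\ref{thm:3} by assembling Theorem~\ref{thm:main_intro} with the existence statement Theorem~\ref{thm:k=q} and the uniqueness statements already available (\cite{Car:Nottingham}, \cite[Theorems~2.3 and~2.4]{CaMa:Nottingham} for the all--finite--type case, and Theorem~\ref{thm:presentation} for the mixed--type case). The whole argument is essentially the informal discussion immediately preceding the statement, formalized and book\-kept.

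First I would determine the possible values of $m$. Since consecutive diamonds of $L$ (genuine or fake) sit in degrees that differ by $q-1$, the earliest genuine diamond of finite type past $L_q$ sits at $L_{2q-1}$ or later. I split into two cases according to whether $L_{2q-1}$ is itself of finite type. If yes, then $m = 2q-1 = (p^0+1)(q-1)+1$, so $s=0$. If $L_{2q-1}$ is of infinite type, then Theorem~\ref{thm:main_intro} applies with $\mu=\lambda$; since we assume $\lambda\neq 1$, the alternative $a=2p^s$ is ruled out, forcing $a-1=p^s$ for some $s\geq 1$, hence $m=(p^s+1)(q-1)+1$. The two cases together give $m=(p^s+1)(q-1)+1$ for some $s\geq 0$.

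Next I would import the uniqueness result corresponding to each value of $s$ to identify $L$ with an algebra appearing in Theorem~\ref{thm:k=q}, and read off the diamond pattern from the latter. For $s=0$, the algebras from~\cite{Car:Nottingham} and~\cite[Theorems~2.3 and~2.4]{CaMa:Nottingham} have diamonds in every degree congruent to $1$ modulo $q-1$, with types forming an arithmetic progression (possibly including fake diamonds of types $0$ and $1$ when $\lambda\in\F_p$); this matches the claim, since the vacuous condition $t\equiv 1\pmod{p^0}$ holds for every $t$. For $s\geq 1$, Theorem~\ref{thm:presentation} pins down $L$ uniquely, and the corresponding entry in Theorem~\ref{thm:k=q} provides the diamond structure: diamonds in every degree $t(q-1)+1$, of infinite type except when $t\equiv 1\pmod{p^s}$, in which case the finite types (again including fake ones when the arithmetic progression passes through $0$ or $1$) follow an arithmetic progression.

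Finally, the statement about finite presentations is just a compilation, depending on the four sub-cases $\lambda\notin\F_p$, $\lambda\in\F_p\setminus\{-2\}$, $\lambda=-2$, and $s\geq 1$: the first three are covered by the presentations given in~\cite{CaMa:Nottingham} (for $L$ itself, a central extension, and a second central extension respectively), while the last case is exactly the central extension supplied by Theorem~\ref{thm:presentation}. The main obstacle is not in this theorem but in its ingredients: the real work lies in establishing Theorem~\ref{thm:main_intro}, which restricts $m$, and Theorem~\ref{thm:presentation}, which provides the uniqueness and finite presentation for $s\geq 1$. Once those are in hand, Theorem~\ref{thm:3} is a matter of matching cases and quoting Theorem~\ref{thm:k=q}.
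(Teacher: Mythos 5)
Your proposal is correct and follows essentially the same route as the paper: the paper's own justification of this theorem is precisely the discussion preceding its statement, namely combining Theorem~\ref{thm:main_intro} (with $\mu=\lambda\neq 1$ ruling out the $a=2p^s$ alternative, so $m=2q-1$ or $m=(p^s+1)(q-1)+1$ with $s\geq 1$), the uniqueness results of \cite{Car:Nottingham} and \cite[Theorems~2.3 and~2.4]{CaMa:Nottingham} in the case $s=0$, and Theorem~\ref{thm:presentation} (proved as Theorem~\ref{thm:fin_pres}) in the case $s\geq 1$, then reading off the diamond pattern and the finite-presentation cases from Theorem~\ref{thm:k=q} and those sources. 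Your case bookkeeping, including the role of $\lambda\neq 0$ for invoking Theorem~\ref{thm:presentation}, matches the paper's intended argument.
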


\section{Nottingham algebras}\label{sec:types}

Recall from the Introduction that a {\em thin} Lie algebra is a graded Lie algebra $L=\bigoplus_{i=1}^{\infty}L_i$,
with $\dim L_1=2$ and satisfying the {\em covering property:}
for each $i$, each nonzero $z\in L_i$ satisfies $[zL_1]=L_{i+1}$.
In particular, if $L$ has infinite dimension then its centre is trivial.
If $L$ has finite dimension then its centre coincides with its highest nonzero component.
For convenience in Section~\ref{sec:intro}
we summarized our results for $L$ of infinite dimension,
but our Theorem~\ref{thm:main}, which is a stronger and more precise version of Theorem~\ref{thm:main_intro},
provides sharp information on certain finite-dimensional thin Lie algebras $L$.

As we mentioned in the Introduction, our definition of a Nottingham algebra includes restrictions on $p$ and $q$,
which we will partly justify below.

\begin{definition}
In this paper a {\em Nottingham algebra} is a thin Lie algebra, over a field of characteristic $p>3$,
with second diamond $L_q$, where $q>5$ is a power of $p$.
\end{definition}

In this paper we use the left-normed convention
for iterated Lie products, hence $[abc]$ stands for $[[ab]c]$.
We also use the shorthand
$[ab^i]=[ab\cdots b]$,
where $b$ occurs $i$ times.

Let $L$ be a Nottingham algebra with second diamond $L_q$.
We set up some standard notation.
There is a nonzero element $y$ of $L_1$, unique up to a scalar multiple, such that $[L_2y]=0$.
Extending to a basis $x,y$ of $L_1$, this means $[yxy]=0$.
This choice of $y$ implies $[Lyy]=0$, which means $(\ad y)^2=0$,
see~\cite{Mat:sandwich} for a more general result.
It is not hard to deduce from this relation that no two consecutive components of $L$
can both be diamonds, see~\cite{Mat:sandwich} for a proof.

The element $y$ centralizes each homogeneous component from $L_{2}$ up to $L_{q-2}$.
That is an nontrivial assertion proved in~\cite{CaJu:quotients}, and relies on the theory of graded Lie algebras of maximal class established in~\cite{CMN,CN}.
Consequently, $L_i$ is spanned by $[yx^{i-1}]$ for $2\le i<q$.
In particular, $v_1=[yx^{q-2}]$ spans the component $L_{q-1}$ and, in turn, $[v_1x]$ and $[v_1y]$ span the second diamond $L_q$.
(The meaning of the subscript in $v_1$ will be revealed in Section~\ref{sec:main}.)
It is now easy to see that one may redefine $x$ in such a way that
\[
[v_1xx]=0=[v_1yy] \quad\textrm{and} \quad [v_1yx]=-2[v_1xy],
\]
see~\cite[Section~3]{AviMat:A-Z} for a cleaner excerpt of the original argument in~\cite{Car:Nottingham}.
In the rest of this paper we refer to such $x$ and $y$ as {\em standard generators} of $L$.
Each of them is only determined up to a scalar multiple, but a different choice will not affect our definitions below,
in particular the definition of a diamond's type.
Because $[yx^q]=[v_1xx]=0$, we have $(\ad x)^q=0$.
Indeed, since $(\ad x)^q$ is a derivation of $L$, its kernel is a subalgebra, but then that must equal $L$ as both generators $x$ and $y$ belong to it.

We recall the definition of {\em type} of a diamond as introduced in ~\cite{CaMa:Nottingham}.
(Note that diamond types are defined differently for thin Lie algebras with second diamond $L_{2q-1}$, see~\cite{CaMa:thin}.)
We do not assign a type to the first diamond $L_1$.
Let then $L_m$ be a diamond past $L_1$, that is, a two-dimensional homogeneous component of $L$ with $m>1$,
and assume $L_{m+1}\neq 0$ to avoid trivial cases.
Because no two consecutive homogeneous components can be diamonds, $L_{m-1}$ is one-dimensional, and so is $L_{m+1}$.
If $w$ spans $L_{m-1}$, then $L_m$ is spanned by $[wx]$ and $[wy]$,
and $L_{m+1}$ is spanned by $[wxx]$, $[wxy]$, $[wyx]$ and $[wyy]$.
The following definition encodes particular relations
among these four elements.


\begin{definition}\label{def:type}
Let $L$ be a Nottingham algebra,
with second diamond $L_q$ and standard generators $x$ and $y$.
Let $L_m$ be a diamond of $L$, with $m>1$, and assume $L_{m+1}\neq 0$.
Let $w$ be a nonzero element in $L_{m-1}$.
\begin{itemize}
\item[(a)]
We say $L_m$ is a diamond of {\em finite type} $\mu$, where $\mu\in\F$,
if
\begin{equation*}
[wxx]=0=[wyy] \quad\textrm{and} \quad \mu[wyx]=(1-\mu)[wxy].
\end{equation*}
\item[(b)]
We say $L_m$ is a diamond of {\em infinite type} if
\begin{equation*}
[wxx]=0=[wyy] \quad\textrm{and} \quad [wyx]=-[wxy].
\end{equation*}
\end{itemize}
\end{definition}

In particular, this definition applies to the second diamond $L_{q}$, which therefore has invariably type $\mu=-1$.
It is shown in~\cite{AviMat:diamond_distances} that every
diamond of $L$ satisfies Definition~\ref{def:type}
for some $\mu\in\F\cup\{\infty\}$.


The values $\mu=0$ and $\mu=1$ cannot actually occur in
Definition~\ref{def:type}.
If $\mu=0$ then the relations $[wxx]=0=[wxy]$
would imply that the element $[wx]$ is central.
Similarly, if $\mu=1$ then the element $[wy]$ would be central.
However, because of the covering property and because $L_{m+1}\neq 0$,
no nonzero element of $L_m$ can be central.
Hence $[wx]=0$ if $\mu=0$, and $[wy]=0$ if $\mu=1$,
contradicting the two-dimensionality of $L_m$.

Thus, strictly speaking, diamonds of type $0$ or $1$ cannot occur,
at least if we insist that a diamond should have dimension two,
as in Definition~\ref{def:type}.
Nevertheless, it is convenient for a uniform description
of the diamonds patterns
in Nottingham algebras to allow ourselves to
call {\em diamonds of type $0$ or $1$}
certain one-dimensional homogeneous components $L_m$,
as long as they satisfy the relations of
Definition~\ref{def:type}
with $\mu=0$ or $1$.
This leads us to the following definition.

\begin{definition}\label{def:type-fake}
Let $L$ be a Nottingham algebra,
with second diamond $L_q$ and standard generators $x$ and $y$.
Let $L_{m-1}$ be a one-dimensional component, spanned by $w$, with $m>1$,
and assume $L_{m+1}\neq 0$.
\begin{itemize}
\item[(a)]
We say $L_m$ is a diamond of {\em of type $1$} if
\begin{equation*}
[wxx]=0 \quad\textrm{and} \quad [wy]=0.
\end{equation*}
\item[(b)]
We say $L_m$ is a diamond {\em of type $0$} if
\begin{equation*}
[wx]=0 \quad\textrm{and} \quad [wyy]=0.
\end{equation*}
\end{itemize}
\end{definition}


We refer to diamonds of type $0$ or $1$ as {\em fake diamonds}
to distinguish them from the {\em genuine diamonds} of dimension two.
The necessity of including fake diamonds in a treatment
of Nottingham algebras
arises from the fact that in various notable instances
(as in Theorem~\ref{thm:k=q} below)
diamonds occur at regular intervals, with types following an arithmetic progression.
When such arithmetic progression of types
passes through $0$ or $1$, fake diamonds arise.

However, this carries an inherent ambiguity:
whenever $L_m$ satisfies the definition of a diamond of type $1$
(which amounts to $[L_{m-1}y]=0$ and $[L_mx]=0$), the next homogeneous component $L_{m+1}$ satisfies the definition of a diamond of type $0$
(because $[L_my]=L_{m+1}$ due to the covering property, and then $[L_{m+1}y]=0$ due to $[Lyy]=0$).
Thus, if $w$ spans $L_{m-1}$ then $[wx]$ spans $L_m$
and $[wxy]$ spans $L_{m+1}$, and we have the relations
\begin{equation}\label{eq:fake_diamonds}
[wy]=0, \qquad
[wxx]=0, \qquad
[wxyy]=0.
\end{equation}
The first and second relations are those in part~(a) of
Definition~\ref{def:type-fake}, and the second and third relations
are those in part $(b)$ if we use $w'=[wx]$ instead of $w$ in it.
For various reasons it is inconvenient to simultaneously regard
two consecutive components as fake diamonds, and so we adopt the following convention.

\begin{convention}\label{convention:fake}
Whenever we have a diamond $L_m$ of type $1$, necessarily followed by a diamond $L_{m+1}$ of type $0$,
we allow ourselves to call (fake) diamond precisely one of $L_m$
and $L_{m+1}$, of the appropriate type, and not the other.
\end{convention}


In several cases there is a natural choice between
calling $L_m$ a diamond of type $1$, or $L_{m+1}$ a diamond of type $0$,
which makes diamonds (including the fake ones)
occur at regular distances,
with a difference of $q-1$ in degrees.

We illustrate that through the following existence result, which will be clarified and expanded in commentaries to follow.

\begin{theorem}\label{thm:k=q}
There exist infinite-dimensional
Nottingham algebras $L$ with second diamond $L_q$,
where (possibly fake) diamonds occur in  each degree congruent to $1$ modulo $q-1$, and have types described by any of the following patterns:
\begin{itemize}
\item[(a)]
all diamonds of type $-1$~\cite{Car:Nottingham};
\item[(b)]
all diamonds of finite types following a non-constant arithmetic progression~\cite{Avi,AviMat:A-Z};
\item[(c)]
all diamonds of infinite type except for those in degrees $\equiv q\pmod{p^s(q-1)}$ for some $s>0$, which have type $-1$~\cite{AviMat:A-Z};
\item[(d)]
all diamonds of infinite type except for those in degrees $\equiv q\pmod{p^s(q-1)}$ for some $s>0$, which have finite types following a non-constant arithmetic progression~\cite{AviMat:mixed_types}.
\end{itemize}
\end{theorem}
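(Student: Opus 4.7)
Theorem~\ref{thm:k=q} is an existence statement assembled from constructions that appear in four cited papers, so the plan is not to build each algebra from scratch but to outline, in a uniform way, how each construction produces a Nottingham algebra with the prescribed diamond pattern.

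For case~(a) I would recall Caranti's realization in~\cite{Car:Nottingham} of the graded Lie algebra of the Nottingham group as a cyclic grading of a Witt-type modular Lie algebra. In that model a homogeneous component $L_i$ sits naturally inside the derivation algebra of a truncated polynomial ring, and one verifies by direct bracket computations in the standard basis that diamonds appear exactly in degrees $\equiv 1 \pmod{q-1}$, all of type $-1$, by computing the adjoint action of $L_1$ on each diamond.

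For cases~(b), (c), and (d) my plan is to package the constructions of~\cite{Avi, AviMat:A-Z, AviMat:mixed_types} uniformly. One chooses spanning vectors for the successive components (building on $v_1=[yx^{q-2}]$ as introduced in Section~\ref{sec:types}) and prescribes scalars $\alpha_k,\beta_k \in \F$ so that $[v_k x]$ and $[v_k y]$ are explicit linear combinations of basis elements of the following components. These scalars are chosen so that the covering property holds throughout $L$ and so that the relations of Definition~\ref{def:type} (or Definition~\ref{def:type-fake}) match the claimed type at each diamond degree. In case~(c) an integer parameter $s$ controls the period of finite-type diamonds modulo $p^s(q-1)$, while in (b) and (d) a linear recursion in the coefficients produces the arithmetic progression of finite types, with fake diamonds arising automatically at the degrees where the progression meets $0$ or $1$.

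The main technical obstacle is verifying that the prescribed brackets define a consistent Lie algebra structure, in particular that all Jacobi identities are satisfied and that no unintended diamond appears between the prescribed ones. In the cited papers this is achieved either by realizing $L$ as a graded subalgebra of a loop algebra of a simple modular Lie algebra of Cartan type, where the Jacobi identity is automatic and only the identification of the grading remains to be carried out, or by exhibiting a finite presentation and verifying its consistency by induction on the degree using the covering property as the inductive invariant. The passage between the finite-type and infinite-type regimes in (c) and (d), and the correct handling of fake diamonds dictated by Convention~\ref{convention:fake}, is the most delicate piece of bookkeeping.
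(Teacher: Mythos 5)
Your proposal takes essentially the same route as the paper, which gives no self-contained proof of Theorem~\ref{thm:k=q} but defers to the explicit constructions in the cited works: case~(a) via a cyclic grading of Zassenhaus algebras (the Witt algebra when $q=p$) in~\cite{Car:Nottingham}, and cases~(b)--(d) via cyclic gradings of simple modular Lie algebras of Cartan type (with loop-algebra realizations, and for~(c) and~(d) the generalized-exponential machinery needed to produce the gradings) in~\cite{Avi,AviMat:A-Z,AviMat:mixed_types}. One caution: your alternative of proving existence by a finite presentation checked degree-by-degree would not by itself rule out collapse to finite dimension --- in this paper and in~\cite{CaMa:Nottingham} presentations yield only uniqueness --- so the realization inside the Cartan-type (loop) algebras, where the Jacobi identity is automatic, is the indispensable ingredient, exactly as in the sources the paper relies on.
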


Nottingham algebras as in case~(a) of Theorem~\ref{thm:k=q},
thus with all diamonds having type $-1$,
were explicitly constructed in~\cite{Car:Nottingham},
using a certain cyclic grading of Zassenhaus algebras.
The special case where $q=p$ is the graded Lie algebra associated with the lower central series of the Nottingham group, thus justifying their name.
They were also shown in~\cite{Car:Nottingham} to be uniquely determined by some finite-dimensional quotient.
Here and in certain other cases such `uniqueness' was proved by exhibiting a finite presentation for some central extension of $L$.
(In most cases $L$ is not itself finitely presented.)

Concerning case~(b) of Theorem~\ref{thm:k=q},
Nottingham algebras including fake diamonds were first observed in~\cite{CaMa:Nottingham}).
More precisely, finite presentations for central extensions (and second-central in one case) of Nottingham algebras were given
(with one exception on which we will expand below),
where the diamonds occur in all degrees congruent to $1$ modulo $q-1$,
and their types follow a non-constant arithmetic progression.
If that arithmetic progression passes through $0$, that is, if it runs through the prime field,
then those diamonds include fake diamonds, of both types $0$ and $1$.
Such Nottingham algebras were explicitly constructed, thus proving their existence,
in~\cite{Avi} in case all types belong to the prime field, and in~\cite{AviMat:A-Z} otherwise.
Those constructions use certain finite-dimensional simple modular Lie algebras of Cartan type,
and certain gradings of them over a finite cyclic group.

Nottingham algebras where the third diamond has infinite type include those of cases~(c) and~(d).
Again, their constructions in~\cite{AviMat:A-Z} and~\cite{AviMat:mixed_types} used
certain finite-dimensional simple modular Lie algebras of Cartan type, but special tools involving generalized exponentials of derivations
had to be developed for producing the required gradings,
in~\cite{Mat:Artin-Hasse,AviMat:Laguerre,AviMat:gradings}.
Proving uniqueness of those Nottingham algebras is one of the goals of the present paper, in Theorem~\ref{thm:fin_pres},
which implies Theorem~\ref{thm:presentation}.

In all cases of Theorem~\ref{thm:k=q}, each homogeneous component which is not a diamond or immediately precedes a diamond is centralized by $y$.
Together with this information,
the locations and types of all diamonds,
as specified in each case of Theorem~\ref{thm:k=q},
give a complete description of those Nottingham algebras.
Note that each of the Nottingham algebras of Theorem~\ref{thm:k=q} has diamonds in each degree congruent to $1$ modulo $q-1$.
In particular, the distance between consecutive diamonds in those particular Nottingham algebras
is invariably $q-1$, provided that we assign
an appropriate type $0$ or $1$ to each fake diamond
(making use of Convention~\ref{convention:fake}).
Several such features remain true for arbitrary Nottingham algebras,
as we discuss in the next section.

All Nottingham algebras of Theorem~\ref{thm:k=q}
display a periodic structure which, however,
is not a universal characteristic
of Nottingham algebras.
In fact, in his PhD thesis~\cite{Young:thesis}
David Young gave two procedures which allow
one to produce two Nottingham algebras
$\mathcal{T}_{q,1}(M)$ and $\mathcal{T}_{q,2}(M)$, both with second diamond $L_q$,
starting from any given graded Lie algebra $M$ of maximal class with at most two distinct two-step centralizers (see~\cite{CMN,CN} for the latter).
The diamond patterns of
$\mathcal{T}_{q,1}(M)$
and
$\mathcal{T}_{q,2}(M)$
reflect the pattern of
two-step centralizers of $M$, in two different ways.
Because, over any given field of characteristic $p$,
there are uncountably many
such algebras $M$, and most of them are not periodic,
corresponding assertions carry over to the class of Nottingham algebras, over a given field of characteristic $p$ and for a fixed power $q$ of $p$.
Both algebras
$\mathcal{T}_{q,1}(M)$
and
$\mathcal{T}_{q,2}(M)$
have second diamond $L_q$ (of type $-1$ by definition),
and all remaining diamonds are fake or have infinite type.
We focus here on $\mathcal{T}_{q,2}(M)$,
which is the one relevant to this paper.

Diamonds of infinite type in $L=\mathcal{T}_{q,2}(M)$
occur in sequences, of lengths dictated
in a certain way by the structure of $M$,
separated by single occurrences of fake diamonds.
The diamonds of infinite type in those sequences
occur at distances of $q-1$ in degree.
However, if $L_m$ is a diamond ending any such sequence,
then $L_{m+q-1}$ is a (fake) diamond of type $1$,
and then $L_{m+2q-1}$ begins the next sequence
of diamonds of infinite type.
Thus, the degree difference between $L_{m+q-1}$ and $L_{m+2q-1}$
equals $q$, rather than $q-1$ as in the examples
from Theorem~\ref{thm:k=q} which we discussed earlier.
However, if we make use of the ambiguity
which is intrinsic in the definition of fake diamonds,
and view $L_{m+q}$ as a diamond of type $0$, then that has distance
$q-1$ from the next diamond $L_{m+2q-1}$.
In conclusion, the existence of $\mathcal{T}_{q,2}(M)$
does not contradict a general claim
(which is a theorem in~\cite{AviMat:diamond_distances},
see Section~\ref{sec:main} below)
that the distance between two consecutive diamonds
of a Nottingham algebra may always be interpreted
to equal $q-1$,
provided that in the presence of fake diamonds
we allow ourselves to choose which component we call fake diamond,
according to Convention~\ref{convention:fake}.
We stress that, differently from the algebras considered in
Theorem~\ref{thm:k=q}, a fake diamond in $\mathcal{T}_{q,2}(M)$
needs to be interpreted
in two different ways (with the corresponding shift by one in degree),
according to which distance we intend to measure
(whether from the previous or to the next diamond).
This double interpretation of the same fake diamond
is required in $\mathcal{T}_{q,1}(M)$ as well,
and also in other Nottingham algebras studied in~\cite{Young:thesis}.

\section{The degree of the second diamond of finite type }\label{sec:main}

It is not at all obvious that a two-dimensional component
$L_m$ of an arbitrary Nottingham algebra, with $m>q$,
should satisfy the relations of Definition~\ref{def:type} for some $\mu$, thus allowing
type $\mu$ to be assigned to it.
In fact, this is one of the main conclusions of~\cite{AviMat:diamond_distances}.
More generally, it is shown there that whenever
$[L_{m-2}y]=0$ and $[L_{m-1}y]\neq 0$ for some $m>q$,
either $L_m$ is two-dimensional and can be assigned a
type $\mu$ according to Definition~\ref{def:type},
or $L_m$ is a (fake) diamond of type $0$
according to Definition~\ref{def:type-fake}.
As we have observed right after that definition,
the latter situation
admits the alternate interpretation that
$L_{m-1}$ is a diamond of type $1$.

Another main result of~\cite{AviMat:diamond_distances}
is that any two consecutive diamonds in an arbitrary Nottingham algebra can always be assumed to have a difference of $q-1$
in degrees, allowing appropriate interpretation in case
fake diamonds are involved.
We refer the reader
to~\cite{AviMat:diamond_distances}
for a deeper discussion of this and
further results,
and state here only the portions
which we need in this paper.



\begin{theorem}\label{thm:distance}
Let $L$ be a Nottingham algebra with second diamond $L_q$, and standard generators $x$ and $y$.
Let $L_m$ be a (possibly fake) diamond of $L$, with $m\ge q$.
\begin{itemize}
\item[(a)]
If $L_m$ is a genuine diamond then $L_{m+q-1}$ is a diamond.
\item[(b)]
If $L_m$ is a diamond of type $1$, then either $L_{m+q-1}$
or $L_{m+q}$ is a diamond.
\item[(c)]
If $L_m$ is a diamond of type $0$ and, in addition,
$L_{m-q+1}$ is a diamond of type different from $0$,
then $L_{m+q-1}$ is a diamond.
\end{itemize}
Furthermore, in each case $y$ centralizes $L_{m+1},\ldots,L_{m+q-3}$,
and also $L_{m+q-2}$ if $L_{m+q-1}$ is not a diamond in assertion~(b).
\end{theorem}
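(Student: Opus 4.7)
The plan is to set up with standard generators $x, y$ and a nonzero $w \in L_{m-1}$, and to exploit three structural identities already established: $(\ad y)^2 = 0$, $[yxy]=0$, and $(\ad x)^q = 0$. In each of the three cases, the defining relations from Definitions~\ref{def:type} and~\ref{def:type-fake} pin down the first brackets of $w$ with $x$ and $y$, in particular picking out an element $v \in L_{m+1}$ with $[vy]=0$ (namely, $[wxy]$ in cases (a) and (b), and $[wyx]$ in case (c)). From there the work splits naturally into two parts: propagate $y$-centralization through the one-dimensional components $L_{m+1}, \ldots, L_{m+q-3}$, and then invoke the $x^q$-relation to force a diamond at $L_{m+q-1}$ (or $L_{m+q}$ in case (b)).

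For the $y$-centralization, I would proceed by induction on $i$: assuming $L_{m+i}$ is one-dimensional, spanned by $v_i := [vx^{i-1}]$ with $[v_i y]=0$, show that $L_{m+i+1} = \F v_{i+1}$ with $v_{i+1} = [v_i x]$ and $[v_{i+1}y]=0$. Using Jacobi in the form $[[ab]c]=[[ac]b]+[a[bc]]$ one writes $[v_{i+1}y] = [v_i[xy]]$; a further Jacobi expansion of the right-hand side, together with $[[xy]y]=0$ and the inductive vanishing, shows that $[v_i[xy]]$ equals its own negative, hence vanishes since $\charac(L) \neq 2$. One then verifies via the covering property that $L_{m+i+1}$ is genuinely one-dimensional, so the induction continues up to $i=q-3$ (and to $i=q-2$ in case (b), when $L_{m+q-1}$ fails to be a diamond).

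The final step is to produce the diamond at the correct degree. In case (a), we apply $(\ad x)^q = 0$ to the nonzero element $[wy] \in L_m$, obtaining a nontrivial linear relation at degree $m+q-1$ which, combined with the one-dimensional description of $L_{m+q-1}$ just established, forces a second element linearly independent from $v_{q-1}$, giving a diamond. In case (b), $[wy]$ vanishes by hypothesis, so the analogous argument must be applied one step later to a surrogate, and the new diamond may appear at $L_{m+q-1}$ or $L_{m+q}$ according to whether a certain characteristic-$p$ coefficient vanishes; this dichotomy matches precisely the ambiguity addressed by Convention~\ref{convention:fake}. In case (c) the hypothesis that $L_{m-q+1}$ is a diamond of type different from $0$ is used backwards, to rule out a configuration in which the $x$-string emanating from earlier components is so truncated that the $x^q$-relation fails to deliver a new diamond at $L_{m+q-1}$.

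The main obstacle I expect is the inductive $y$-centralization step in cases (b) and (c), where the initial relations are weaker than in case (a) and the bookkeeping through iterated Jacobi expansions is more delicate, all the more because the one-step ambiguity of fake diamonds must be respected throughout. Producing the sharp conclusion that each constructed component is (or is not) really a diamond, as opposed to just being nonzero, further requires careful control of the coefficients modulo $p$ in the relation supplied by $(\ad x)^q = 0$. The argument is carried out in full detail in~\cite{AviMat:diamond_distances}, and we would follow that treatment closely, adapting it case by case.
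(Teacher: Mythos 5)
First, a point of comparison: this paper contains no proof of Theorem~\ref{thm:distance}. It is imported verbatim from \cite{AviMat:diamond_distances}, and the surrounding text only explains how to read and apply it. So there is no in-paper argument to match your sketch against; what can be assessed is whether your outline would actually yield a proof, and there it has a genuine gap at precisely the step you flag as the main obstacle.

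The inductive $y$-centralization step does not work as described. Writing $v_{i+1}=[v_ix]$ and using $[v_iy]=0$, the Jacobi identity gives $[v_{i+1}y]=[[v_ix]y]=[[v_iy]x]+[v_i[xy]]=[v_i[xy]]$; but every further expansion of $[v_i[xy]]$ using $[[xy]y]=0$ and the inductive hypotheses just returns $[v_ixy]$ itself (or the tautology $[v_{i+1}yy]=0$, already known from $(\ad y)^2=0$), so the claimed ``equals its own negative'' cancellation is circular and nothing forces $[v_{i+1}y]=0$. This is not a bookkeeping issue: the paper itself stresses that even the base-case analogue --- that $y$ centralizes $L_2,\dots,L_{q-2}$ --- is a nontrivial assertion proved in \cite{CaJu:quotients} and resting on the structure theory of graded Lie algebras of maximal class from \cite{CMN,CN}; the version needed past a diamond in degree $m\ge q$ is the substantive content of \cite{AviMat:diamond_distances} and cannot be recovered from $(\ad y)^2=0$, $[yxy]=0$ and $(\ad x)^q=0$ by elementary Jacobi manipulations alone. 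The final step is likewise underspecified: in case (a) one has $[wxx]=0$, so $[wx^{q-1}]$ vanishes and the pair of elements you propose to compare in $L_{m+q-1}$ is not the right one; producing two independent elements there requires tracking monomials of the form $[wxyx^i]$ and a genuine argument that $[uy]\neq 0$ for $u$ spanning $L_{m+q-2}$. In case (c) you never say how the hypothesis on $L_{m-q+1}$ actually enters. Deferring to \cite{AviMat:diamond_distances} is legitimate --- it is exactly what the authors do --- but then the statement should be presented as a citation, not as a proof whose key mechanism, as written, is vacuous.
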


Various clarifications are in order.
The final assertion of
Theorem~\ref{thm:distance},
together with assertion~(a) and~(b),
show that a Nottingham algebra
$y$ centralizes each homogeneous
component which is not
a diamond or immediately
precedes a diamond.
Thus, the locations and types of the diamonds
(still making use of Convention~\ref{convention:fake})
suffice to completely describe an
arbitrary Nottingham algebra.
Next, the two conclusions of assertion~(b)
are not disjoint, the common case being
when $L_{m+q-1}$ is a diamond of type $1$,
which means the same as $L_{m+q}$
being a diamond of type $0$.
Also, the hypothesis of assertion~(b)
can be alternately read as $L_{m+1}$
being a diamond of type $0$.
Altogether, we see that the difference
in degree between consecutive diamonds
can always be read as $q-1$,
as long as we suitably interpret
any fake diamonds involved.
Finally, note that if we read the hypothesis of assertion~(c)
as $L_{m-1}$ being a diamond of type $1$, then assertion~(b)
would only tell us that either $L_{m+q-2}$ or $L_{m+q-1}$
is a diamond.
In fact, inferring the stronger
conclusion of assertion~(c)
requires information on the diamond
which precedes $L_m$.
All assertions of Theorem~\ref{thm:distance} were stated
in~\cite{AviMat:diamond_distances} under a blanket hypothesis
that Nottingham algebras have infinite dimension.
However, as pointed out there, those assertions remain true
for a finite-dimensional Nottingham algebra $L$
as long as none of the homogeneous components
they mention is the last nonzero
homogeneous component of $L$ or the preceding one.

After recalling such general features of Nottingham algebras
established in~\cite{AviMat:diamond_distances}, we focus on
a major goal of this paper, Theorem~\ref{thm:main_intro},
which determines the possible degrees $t$ in which the
(possibly fake) next diamond of finite type $L_t$ may occur past $L_q$ and a sequence of diamonds of infinite type in a Nottingham algebra $L$.
We state here a more precise version of
Theorem~\ref{thm:main_intro}.
While that more concise result assumes $L$ to have infinite dimension, we relax that assumption in Theorem~\ref{thm:main},
thus turning the goal into proving finite-dimensionality of $L$ unless the degree of that diamond of finite type has the particular form claimed in Theorem~\ref{thm:main_intro}.

According to Theorem~\ref{thm:distance},
all distances between consecutive diamonds up to $L_t$ equal $q-1$, and hence $t\equiv 1\pmod{q-1}$.

\begin{theorem}\label{thm:main}
Let $L$ be a Nottingham algebra with second diamond $L_q$ and standard generators $x$ and $y$.
Suppose $L_{2q-1}$ is a diamond of infinite type.
Suppose that the earliest diamond of finite type past $L_{q}$
occurs in degree $a(q-1)+1$, and has type $\mu\in \F$.
Then the following assertions hold:
\begin{itemize}
\item[(a)]
$a$ is even;
\item[(b)]
if $a \not \equiv 1 \pmod{p}$ then $L_{(a+1)(q-1)+3}=0$, unless
$a\equiv 0\pmod{p}$, $\mu=1$, and $[L_{(a+1)(q-1)},y]=0$;
\item[(c)]
if $a\equiv 1 \pmod{p}$ but $a-1$ is not a power of $p$, then
$L_{(a+p^s)(q-1)+2}=0$, where $p^s$ is the highest power of $p$ which divides $a-1$;
\item[(d)]
if $a\equiv 0\pmod{p}$, $\mu=1$, $[L_{(a+1)(q-1)},y]=0$, and $a/2$ is not a power of $p$,
then $L_{(a+p^s)(q-1)+3}=0$, where $p^s$ is the highest power of $p$ which divides $a$.
\end{itemize}
\end{theorem}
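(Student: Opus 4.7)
The plan is to trace the propagation of information through the $a-1$ intermediate diamonds of infinite type lying strictly between $L_q$ and $L_{a(q-1)+1}$, and then impose the finite-type relations at $L_{a(q-1)+1}$ together with the Jacobi identity and the covering property just past this diamond, extracting arithmetic constraints on $a$ from the resulting polynomial equations modulo~$p$.

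For the setup, I would fix standard generators $x,y$ as in Section~\ref{sec:types}, and for $1\le i\le a$ let $v_i$ span the one-dimensional component $L_{i(q-1)}$ preceding the $i$-th diamond, using Theorem~\ref{thm:distance}. Since every component that neither is a diamond nor immediately precedes one is centralized by $y$, the element $v_{i+1}$ is a scalar multiple of either $[v_i\,x^{q-1}]$ or $[v_i\,y\,x^{q-2}]$, depending on how the chain enters the next block. The infinite-type relations $[v_i\,xx]=[v_i\,yy]=0$ and $[v_i\,yx]=-[v_i\,xy]$, valid for $2\le i\le a-1$, then yield a clean recursion letting one move a ``$y$-position'' through the chain at the cost of a sign at each crossing.

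The main computation is to expand a long word landing just past $L_{a(q-1)+1}$, such as $[v_1\,y\,x^{(a-1)(q-1)+j}]$ for a few small values of $j$, by repeatedly commuting $y$ past $\ad x$ using Jacobi, together with $[Lyy]=0$ and the identity $(\ad x)^q=0$ established in Section~\ref{sec:types}. Each infinite-type diamond traversed contributes a sign, and each cluster of $\ad x$'s between consecutive diamonds produces a binomial coefficient. Imposing the type-$\mu$ relations at $L_{a(q-1)+1}$ then expresses every element in $L_{(a+1)(q-1)+2}$, $L_{(a+1)(q-1)+3}$, and slightly beyond as a scalar multiple of a fixed basis element, with coefficients that are polynomials in $a$ reducing, modulo~$p$, to products of binomial coefficients $\binom{a}{k}$ or $\binom{a-1}{k}$. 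Each of (a)--(d) then follows by reading off the appropriate coefficient: part~(a) drops out from the leading coefficient, which carries a sign $(-1)^{a-1}$ inherited from the $a-1$ infinite-type diamonds combined with the type $-1$ of $L_q$, forcing $a$ to be even. Part~(b) amounts to showing that the obstruction to the existence of $L_{(a+1)(q-1)+3}$ is a nonzero multiple of a linear expression in $a$ modulo~$p$, with the exceptional case corresponding to the degeneration in which $\mu=1$ and the secondary relation $[L_{(a+1)(q-1)},y]=0$ annihilates the other piece of the obstruction. Parts~(c) and~(d) are deeper vanishing statements: when the first-order obstruction vanishes, I would push the computation $p^s$ further steps and apply Lucas' theorem to the relevant binomial coefficients to locate exactly where a nontrivial constraint re-emerges, which is at the power $p^s$ dividing $a-1$ (for (c)) or $a$ (for (d)).

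The main obstacle I expect is the bookkeeping for the propagation through the $a-1$ infinite-type diamonds: each traversal mixes the two generators of the diamond with specific signs, and the cumulative coefficient of each long word is an alternating binomial sum whose residue modulo~$p$ must be isolated in closed form. A secondary technicality is the possibility that $L$ is finite-dimensional, so that the computations must be performed in a finite quotient according to the remark after Theorem~\ref{thm:distance}, with care that the homogeneous components being manipulated are not among the last two nonzero ones; this adds some case analysis at the boundary but does not alter the shape of the argument.
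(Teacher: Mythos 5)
Your general framing (a chain of elements $v_i$ spanning the components $L_{i(q-1)}$, coefficients reducing to binomial coefficients modulo $p$, Lucas' theorem, and a parity/sign argument for (a)) points in the right direction, but the computational engine you propose does not work, and the hardest steps are missing. First, the setup has an error: since every diamond satisfies $[v_ixx]=0$, the element $[v_ix^{q-1}]$ vanishes, so $v_{i+1}$ cannot be a multiple of it; the chain must interleave a $y$, as in $v_{i+1}=[v_ixyx^{q-3}]$. More seriously, your ``main computation'' $[v_1yx^{(a-1)(q-1)+j}]$ is identically zero for trivial reasons, because $(\ad x)^q=0$ kills any word containing $q$ or more consecutive $x$'s, so no constraint on $a$ can be extracted from it. The identities that actually produce the constraints come from bracketing two \emph{long} elements against each other via the generalized Jacobi identity, e.g.\ $[v_{a-1}[v_1xy]]=-[v_1xyv_{a-1}]$ with $v_{a-1}=[v_2v_1^{a-3}]$ (this is where the sign $(-1)^{a-3}$, and hence assertion (a), comes from), and $[v_2xy[v_{a-1}x]]=[v_{a-1}x[v_2yx]]$ for assertion (b); your sketch identifies neither these nor the preliminary lemmas on the adjoint action of $v_1$ and $v_2$ near a diamond that make such expansions tractable.

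Second, assertions (c) and (d) claim that an entire homogeneous component vanishes, which requires producing a relation $c\,w=0$ with $c\not\equiv 0\pmod p$ for a spanning element $w$; ``locating where a nontrivial constraint re-emerges'' via Lucas does not by itself supply such a relation. The paper first runs an induction (absent from your proposal) showing that the $p^s-1$ diamonds following $L_{a(q-1)+1}$ all have infinite type, using equations such as $[v_{a-p^t}[v_{k+p^t}yx]]=-[v_{a-p^t}[v_{k+p^t}xy]]$, and then obtains the vanishing from a genuinely different device: expanding $0=[uu]$ for a carefully chosen element $u=[v_bxyx^{(q-3)/2}]$ of exactly half the target degree (this is where the evenness of $a$ from part (a) is consumed). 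Without this, or an equivalent source of a vanishing relation, parts (c) and (d) cannot be reached by the method you describe.
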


If the Nottingham algebra $L$ of
Theorem~\ref{thm:main}
has infinite dimension, it follows that
the next diamond of finite type past $L_q$
has degree $a(q-1)+1$ where either $a-1$
is a power of $p$ greater than $1$
(coming from Assertion~(c)),
or $a$ equals twice a power of $p$
greater than $1$ (coming from Assertion~(d)).
This is the content of Theorem~\ref{thm:main_intro}.
The former case occurs, in particular,
for the Nottingham algebras of
cases~(c) and~(d) of Theorem~\ref{thm:k=q}.
The latter case occurs for the family of algebras
$\mathcal{T}_{q,2}(M)$
found by David Young in~\cite{Young:thesis},
which we briefly introduced
at the end of the previous section.
In fact, if the graded Lie algebra $M$ of maximal class,
with at most two distinct two-step centralizers,
from which $L=\mathcal{T}_{q,2}(M)$ is constructed,
has {\em first constituent} of length $2p^s$
(as defined in~\cite{CN}),
then the earliest diamond of finite type past $L_q$
occurs as fake of type $1$ in degree $2p^s(q-1)+1$.

The proof of Theorem~\ref{thm:main} is rather long and occupies the whole of Section~\ref{sec:proof_main}.
Before embarking in the proof proper, in the next section
we set up some notation and perform some preliminary calculations
in an arbitrary Nottingham algebra,
which will also be useful in proving our second main result, in Section~\ref{sec:fin_pres}.

\section{General calculations near the diamonds}\label{sec:calculations}

Throughout the paper we make extensive use of the {\em generalized Jacobi identity}
\[
[a[bc^n]]=\sum_{i=0}^{n} (-1)^i \binom{n}{i} [ac^{i}bc^{n-i}].
\]
Two special instances which often occur are $[a[bc^q]]=[abc^q]-[ac^qb]$ (which amounts to $(\ad c)^q$ being a derivation), and
$
[a[bc^{q-1}]]=\sum_{i=0}^{q-1}\;[ac^{i}bc^{q-1-i}],
$
due to $\binom{q-1}{i}\equiv (-1)^i\pmod{p}$.
More generally, the binomial coefficients involved in the generalized Jacobi identity can be efficiently evaluated modulo $p$ by means of Lucas' theorem:
if $q$ is a power of $p$ and  $a,b,c,d$ are non-negative
integers with $b,d<q$, then
$
\binom{aq+b}{cq+d}\equiv \binom{a}{c}\binom{b}{d} \pmod p.
$

Now consider a Nottingham algebra $L$ with second diamond in degree $q$.
Besides $v_{1}=[yx^{q-2}]$ as in the previous section, we set $v_{2}=[v_{1}xyx^{q-3}]$.
Note that, according to Theorem~\ref{thm:distance},
$y$ centralizes $L_{q+1},\ldots,L_{2q-3}$,
and $L_{2q-1}$ is a diamond, possibly fake,
spanned by $[v_2x]$ and $[v_2y]$.
In the rest of the paper we will use $v_k$ to denote a certain element spanning the homogeneous component which immediately precedes the $(k+1)$st diamond.
Although in some previous papers a notation with the subscript increased by one was used,
our choice appears slightly more convenient because $v_k$ will have degree $k(q-1)$.

Suppose $L_m$ is a diamond of arbitrary type $\mu\in\F\cup\{\infty\}$,
and assume $v_k$ spans $L_{m-1}$.
Assume $L_{m+q-1}$ is a diamond
(which is a consequence of
Theorem~\ref{thm:distance}
only when $L_m$ is genuine).
Then according to
Theorem~\ref{thm:distance}
the element $y$ centralizes $L_{m+1},\ldots,L_{m+q-3}$
Define the element $v_{k+1}$ in degree
$m+q-2$ as
\[
v_{k+1}=\begin{cases}
[v_{k}xyx^{q-3}] &\mbox{if } \mu \neq 0, \\
[v_{k}yx^{q-2}] &\mbox{otherwise}.
\end{cases}
\]
We start with describing the adjoint action of $v_{1}$ on the elements close to this diamond.
We use the convention $\infty^{-1}=0$.

\begin{lemma}\label{lemma:v1}
Suppose $[v_{k}yx]=(\mu^{-1}-1)[v_{k}xy]$ with $\mu \in \F^{\ast}\cup\{\infty\}$.
Then we have
\begin{align*}
&[v_{k}v_{1}]=(\mu^{-1}+1)v_{k+1}, \\
&[v_{k}xv_{1}]=[v_{k+1}x], \\
&[v_{k}yv_{1}]=(1-\mu^{-1})[v_{k+1}y], \\
&[v_{k}xyv_{1}]=-(2[v_{k+1}yx]+[v_{k+1}xy]), \\
&[v_{k}xyxv_{1}]=-(3[v_{k+1}yx^2]+2[v_{k+1}xyx]).
\end{align*}
\end{lemma}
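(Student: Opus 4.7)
The plan is to prove all five identities by the same method: expand $v_1 = [yx^{q-2}]$ via the generalized Jacobi identity as
\[
[a\,v_1] = \sum_{i=0}^{q-2} (-1)^i \binom{q-2}{i}\, [a\,x^i\,y\,x^{q-2-i}]
\]
for $a$ equal to each of $v_k$, $v_k x$, $v_k y$, $v_k xy$, $v_k xyx$ in turn, then discard the vanishing summands. Three cancellation mechanisms do almost all the work: the diamond relation $[v_k xx]=0$ kills every summand containing $[v_k x^i\cdots]$ with $i\ge 2$; the relation $[v_k yy]=0$ disposes of the $i=0$ summand whenever $a$ ends in $y$; and Theorem~\ref{thm:distance} guarantees that $y$ centralizes $L_{m+1},\ldots,L_{m+q-3}$, which removes every summand whose trailing $y$ is attached to an element lying in one of those intermediate components. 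Binomial coefficients reduce modulo $p$ via Lucas' theorem to $\binom{q-2}{1}\equiv\binom{q-2}{q-3}\equiv -2$, $\binom{q-2}{q-4}\equiv 3$, and $\binom{q-2}{q-2}=1$, while the parities of $q-2,q-3,q-4$ (with $q$ an odd prime power) supply the signs $(-1)^{q-2}=(-1)^{q-4}=-1$ and $(-1)^{q-3}=1$.

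For the first four identities these cancellations leave only the summands indexed by $i=0,1$ (for $[v_k v_1]$); $i=0$ (for $[v_k x v_1]$); $i=q-2$ (for $[v_k y v_1]$); and $i=q-3,q-2$ (for $[v_k xy v_1]$). Each surviving summand is rewritten using the hypothesis $[v_k yx]=(\mu^{-1}-1)[v_k xy]$, extended inductively to $[v_k y x^j]=(\mu^{-1}-1)[v_k xy x^{j-1}]$ for $j\ge 1$, together with the definitions $v_{k+1}=[v_k xyx^{q-3}]$ and $[v_{k+1} x]=[v_k xyx^{q-2}]$. Assembling the pieces yields the four claimed formulas directly; for instance, in $[v_k v_1]$ only $i=0,1$ survive, giving $(\mu^{-1}-1)v_{k+1}+2v_{k+1}=(\mu^{-1}+1)v_{k+1}$.

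The delicate point appears in the last identity $[v_k xyx v_1]$, where the surviving indices are $i=q-4,q-3,q-2$, contributing respectively $-3[v_{k+1} yx^2]$, $-2[v_{k+1} xyx]$, and an unwanted extra term $-[v_k xyx^{q-1} y] = -[v_{k+1} xxy]$. The proof is then completed by showing that this last term vanishes, and this is the only place where the standing hypothesis that $L_{m+q-1}$ is itself a diamond (implicit in the very definition of $v_{k+1}$) is used essentially: whatever the type of $L_{m+q-1}$, its defining diamond relations force either $[v_{k+1} xx]=0$ (if that diamond is genuine, or fake of type $1$) or $[v_{k+1} x]=0$ (if it is fake of type $0$), so that $[v_{k+1} xxy]=0$ in every case, and the three remaining contributions combine into $-(3[v_{k+1} yx^2]+2[v_{k+1} xyx])$, as required.
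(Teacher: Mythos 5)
Your proof is correct and takes essentially the same route as the paper's, which likewise expands $v_1=[yx^{q-2}]$ via the generalized Jacobi identity and discards terms using $[v_kxx]=0=[v_kyy]$ and the centralization of $L_{m+1},\ldots,L_{m+q-3}$ by $y$, writing out only two of the five identities. Your treatment of the fifth identity, where the leftover summand $-[v_{k+1}xxy]$ is shown to vanish because $L_{m+q-1}$ is a (possibly fake) diamond and hence satisfies $[v_{k+1}xx]=0$ or $[v_{k+1}x]=0$, correctly supplies the one detail the paper's terse proof leaves implicit.
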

\begin{proof}
All the claimed equations are easy to prove using the generalized Jacobi identity, so we prove only a couple of them.
The first equation is
\[
[v_{k}v_{1}]=[v_{k}[yx^{q-2}]]=[v_{k}yx^{q-2}]+2[v_{k}xyx^{q-3}]=(\mu^{-1}+1)v_{k+1}.
\]
The third equation is
\[
[v_{k}yv_{1}]=[v_{k}y[yx^{q-2}]]
=-[v_{k}yx^{q-2}y]=(1-\mu^{-1})[v_{k+1}y].
\]
\end{proof}
\begin{rem}\label{rem:v1_mu_0}
In the case $\mu=0$ excluded from Lemma~\ref{lemma:v1}, similar calculations show
$[v_{k}v_{1}]=v_{k+1}$, $[v_{k}yv_{1}]=-[v_{k+1}y]$, $[v_{k}yxv_{1}]=-(2[v_{k+1}yx]+[v_{k+1}xy])$ and
$[v_{k}yx^2v_{1}]=-(3[v_{k+1}yx^2]+2[v_{k+1}xyx])$.
\end{rem}

As an immediate consequence of Lemma~\ref{lemma:v1} we obtain the adjoint action of
$[v_{1}x]$ on elements close to diamonds.
\begin{cor}\label{cor:v1x}
Under the hypotheses of Lemma~\ref{lemma:v1} we have
\begin{align*}
& [v_{k}[v_{1}x]]=\mu^{-1}[v_{k+1}x], \\
&[v_{k}x[v_{1}x]]=0,  \\
&[v_{k}y[v_{1}x]]=(\mu^{-1}-1)([v_{k+1}yx]+[v_{k+1}xy]),\\
&[v_{k}xy[v_{1}x]]=[v_{k+1}yx^2]+[v_{k+1}xyx].
\end{align*}
\end{cor}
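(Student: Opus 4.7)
The plan is to reduce each of the four identities to Lemma~\ref{lemma:v1} via a single application of the Jacobi identity
\[
[w[v_{1}x]] = [wv_{1}x] - [wxv_{1}],
\]
taking $w$ equal to $v_{k}$, $v_{k}x$, $v_{k}y$, and $v_{k}xy$ in turn, and then invoking the formulas of Lemma~\ref{lemma:v1} on each of the two resulting terms. For $w=v_{k}$ the first and second formulas of Lemma~\ref{lemma:v1} combine directly to $(\mu^{-1}+1)[v_{k+1}x]-[v_{k+1}x]=\mu^{-1}[v_{k+1}x]$, proving the first identity. For $w=v_{k}x$ the term $[v_{k}xxv_{1}]$ vanishes by the diamond relation $[v_{k}xx]=0$ at $L_{m}$, while $[v_{k}xv_{1}x]$ collapses to $[v_{k+1}xx]$, which in turn vanishes because $L_{m+q-1}$ is a diamond: indeed, whether genuine or fake (of type $0$ or $1$), the component $v_{k+1}$ spanning $L_{m+q-2}$ always satisfies $[v_{k+1}xx]=0$ under Definitions~\ref{def:type} and~\ref{def:type-fake}.

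The third and fourth identities need one additional ingredient, namely the diamond relation $[v_{k}yx]=(\mu^{-1}-1)[v_{k}xy]$ at $L_{m}$, which allows one to rewrite the term involving a trailing $v_{1}$ in a form where Lemma~\ref{lemma:v1} directly applies. For $w=v_{k}y$ one gets
\[
[v_{k}y[v_{1}x]]=(1-\mu^{-1})[v_{k+1}yx]-(\mu^{-1}-1)[v_{k}xyv_{1}],
\]
and substituting the fourth formula of Lemma~\ref{lemma:v1} followed by collecting the coefficients on $[v_{k+1}yx]$ and $[v_{k+1}xy]$ yields $(\mu^{-1}-1)([v_{k+1}yx]+[v_{k+1}xy])$. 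The computation for $w=v_{k}xy$ is entirely parallel: the fourth formula of Lemma~\ref{lemma:v1} handles $[v_{k}xyv_{1}x]$ (one picks up a trailing $x$), while the fifth handles $[v_{k}xyxv_{1}]$; a single round of coefficient collection produces $[v_{k+1}yx^{2}]+[v_{k+1}xyx]$.

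There is no genuine obstacle in any of this—it is bookkeeping of coefficients once the Jacobi expansion has been performed. The only point that deserves care is the vanishing $[v_{k+1}xx]=0$ used in the second identity, which is precisely where the assumption preceding Lemma~\ref{lemma:v1} that $L_{m+q-1}$ is a diamond is exercised; this is also the reason the conclusions of Lemma~\ref{lemma:v1} and Corollary~\ref{cor:v1x} carry the index $k+1$ in the same coherent way.
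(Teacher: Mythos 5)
Your proof is correct and is exactly the argument the paper intends: the Corollary is stated there as an immediate consequence of Lemma~\ref{lemma:v1}, obtained by expanding $[w[v_{1}x]]=[wv_{1}x]-[wxv_{1}]$ and substituting the formulas of that lemma. Your remark that the second identity additionally exercises $[v_{k+1}xx]=0$, coming from $L_{m+q-1}$ being a (possibly fake) diamond as assumed in the setup preceding Lemma~\ref{lemma:v1}, is the right point to flag and is handled correctly.
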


\begin{rem}\label{rem:v1x_mu_0}
In the excluded case $\mu=0$ we find
$[v_{k}[v_{1}x]]=[v_{k+1}x]$, $[v_{k}y[v_{1}x]]=[v_{k+1}yx]+[v_{k+1}xy]$ and
$[v_{k}yx[v_{1}x]]=[v_{k+1}yx^2]+[v_{k+1}xyx]$.
\end{rem}

Now assume the diamond $L_{m+q-1}$ has infinite type, and set $v_{k+2}=[v_{k+1}xyx^{q-3}]$.
According to Theorem~\ref{thm:distance},
the element $y$ centralizes $L_{m+q},\ldots,L_{m+2q-4}$,
and $L_{m+2q-2}$ is a diamond.
We describe the adjoint action of $v_{2}$ on elements close to the diamond $L_m$.

\begin{lemma}\label{lemma:v2}
Suppose $[v_{k}yx]=(\mu^{-1}-1)[v_{k}xy]$, with $\mu\in \F^{\ast}\cup\{\infty\}$, and $[v_{k+1}yx]=-[v_{k+1}xy]$.
Then we have
\begin{align*}
&[v_{k}v_{2}]=\mu^{-1}v_{k+2}, \quad [v_{k}xv_{2}]=0=[v_{k}yv_{2}], \\
&[v_{k}xyv_{2}]=[v_{k+2}yx]+[v_{k+2}xy],\\
&[v_{k}xyxv_{2}]=2([v_{k+2}yx^2]+[v_{k+2}xyx]).
\end{align*}
Furthermore,  we have
\begin{align*}
&[v_{k}xyx^{q-4}v_{1}]=[v_{k+1}xyx^{q-4}], \quad [v_{k}xyx^{q-4}[v_{1}x]]=0, \\
&[v_{k}xyx^{q-4}v_{2}]=-3([v_{k+2}yx^{q-3}]+[v_{k+2}xyx^{q-4}]).
\end{align*}
\end{lemma}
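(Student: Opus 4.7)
The proof plan parallels the strategy used for Lemma~\ref{lemma:v1}: each of the seven equations is obtained by a direct (if somewhat lengthy) application of the generalized Jacobi identity, invoking at each step the hypotheses on the diamond types at $L_{m}$ and $L_{m+q-1}$, the relations $(\ad y)^{2}=0$ and $(\ad x)^{q}=0$, the centralization of $L_{m+1},\ldots,L_{m+q-3}$ and $L_{m+q},\ldots,L_{m+2q-4}$ by $y$ (granted by Theorem~\ref{thm:distance}), and the formulas of Lemma~\ref{lemma:v1} and Corollary~\ref{cor:v1x}.

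For the first five identities, which describe the adjoint action of $v_{2}=[v_{1}xyx^{q-3}]$ on the elements $v_{k}$, $[v_{k}x]$, $[v_{k}y]$, $[v_{k}xy]$ and $[v_{k}xyx]$, I would unfold each bracket by moving $v_{1}$ or $[v_{1}x]$ next to $v_{k}$ via a generalized Jacobi expansion on the trailing $x^{q-3}$. The coefficients $\binom{q-3}{i}$ modulo $p$ are read off by Lucas' theorem; most resulting terms vanish after invoking $y$-centralization of the intermediate homogeneous components, and the few that survive are rewritten using Lemma~\ref{lemma:v1}, Corollary~\ref{cor:v1x}, and the hypothesis $[v_{k+1}yx]=-[v_{k+1}xy]$ that encodes the infinite type of $L_{m+q-1}$. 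For the last three identities, which describe the action on $[v_{k}xyx^{q-4}]$ of $v_{1}$, $[v_{1}x]$ and $v_{2}$ respectively, I would expand $v_{1}=[yx^{q-2}]$ (respectively $[v_{1}x]$, respectively $v_{2}$) by generalized Jacobi and push $y$ and powers of $x$ across $[v_{k}xyx^{q-4}]$, again discarding most terms by $y$-centralization and matching what remains against $[v_{k+1}xyx^{q-4}]$ or against $[v_{k+2}yx^{q-3}]$ and $[v_{k+2}xyx^{q-4}]$.

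The main obstacle is not conceptual but combinatorial: the argument requires careful bookkeeping of signs and of the binomial coefficients modulo $p$, in order to produce the exact rational coefficients on the right-hand sides, in particular the $-3$ in the final identity, which arises as a sum of several contributions after several rounds of Jacobi. Since all analytic ingredients are already in place, no new idea is required; one only needs to organize the cancellations so as to make them transparent.
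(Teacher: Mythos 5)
Your proposal matches the paper's own proof, which likewise verifies each identity by expanding $v_{2}=[v_{1}xyx^{q-3}]$ (or $v_{1}=[yx^{q-2}]$) via the generalized Jacobi identity, discarding terms using $[v_{k}xx]=0$, $(\ad y)^{2}=0$ and the $y$-centralization from Theorem~\ref{thm:distance}, and rewriting the survivors with Lemma~\ref{lemma:v1}, Corollary~\ref{cor:v1x} and the infinite-type relation $[v_{k+1}yx]=-[v_{k+1}xy]$. The paper itself only displays two representative computations and declares the rest similar, so your plan, while not carrying out every bookkeeping step, is essentially the same argument.
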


\begin{proof}
Using Corollary~\ref{cor:v1x}, the first equation in the first set is
\begin{align*}
[v_{k}v_{2}]&=[v_{k}[v_{1}xyx^{q-3}]]=[v_{k}[v_{1}xy]x^{q-3}]+3[v_{k}x[v_{1}xy]x^{q-4}]\\
&=[v_{k}[v_{1}x]yx^{q-3}]-[v_{k}y[v_{1}x]x^{q-3}]-3[v_{k}xy[v_{1}x]x^{q-4}]\\
&=\mu^{-1}v_{k+2}.
\end{align*}
The first equation in the second set is
\begin{align*}
&[v_{k}xyx^{q-4}v_{1}]=[v_{k}xyx^{q-4}[yx^{q-2}]]\\
&=2[v_{k+1}yx^{q-3}]+3[v_{k+1}xyx^{q-4}]=[v_{k+1}xyx^{q-4}].
\end{align*}
Similar calculations yield the remaining equations.
\end{proof}

\begin{rem}\label{rem:v2_mu_0}
In the case $\mu=0$, which is excluded from
Lemma~\ref{lemma:v2}, we find
$[v_{k}v_{2}]=v_{k+2}$, $[v_{k}yv_{2}]=0$, $[v_{k}yxv_{2}]=[v_{k+2}yx]+[v_{k+2}xy]$
and $[v_{k}yx^{2}v_{2}]=2([v_{k+2}yx^2]+[v_{k+2}xyx])$.
\end{rem}

Now assume the diamond $L_m$ has infinite type, and that the diamond $L_{m+q-1}$ has finite type $\mu$.
Set $v_{k+2}=[v_{k+1}xyx^{q-3}]$, unless $\mu=0$, in which case set $v_{k+2}=[v_{k+1}yx^{q-2}]$.
According to Theorem~\ref{thm:distance},
the element
$y$ centralizes $L_{m+q},\ldots,L_{m+2q-4}$.
Assume $L_{m+2q-2}$ is a diamond
(which is a consequence of Theorem~\ref{thm:distance}
only if $\mu\neq 1$).
In case $\mu=1$ we do assume here that $L_{m+2q-2}$ is a diamond.
We describe the adjoint action of $v_{2}$ on the elements close to the diamond $L_m$.

\begin{lemma}\label{lemma:v2ext}
Suppose $[v_{k}xy]=-[v_{k}yx]$  and  $[v_{k+1}yx]=(\mu^{-1}-1)[v_{k+1}xy]$ for some $\mu\in \F^{\ast}$.
Then we have
\begin{align*}
&[v_{k}v_{2}]=-2\mu^{-1} v_{k+2},\\
&[v_{k}xv_{2}]=-\mu^{-1} [v_{k+2}x],\\
&[v_{k}yv_{2}]=-\mu^{-1} [v_{k+2}y], \\
&[v_{k}xyv_{2}]=[v_{k+2}xy]+(2\mu^{-1}+1)[v_{k+2}yx],\\
&[v_{k}xyxv_{2}]=2[v_{k+2}xyx]+(3\mu^{-1}+2)[v_{k+2}yx^2].
\end{align*}
\end{lemma}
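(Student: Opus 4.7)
The proof will closely follow the pattern of Lemma~\ref{lemma:v2}, with the roles of the types of $L_m$ and $L_{m+q-1}$ swapped: here $L_m$ has infinite type while $L_{m+q-1}$ has finite type $\mu\neq 0$. The decisive point is that Lemma~\ref{lemma:v1} and Corollary~\ref{cor:v1x}, being stated uniformly for $\mu\in\F^{\ast}\cup\{\infty\}$, specialize to $L_m$ with $\mu=\infty$ and $\mu^{-1}=0$. This yields the key ingredients $[v_k [v_1 x]] = 0 = [v_k x [v_1 x]]$, together with $[v_k y [v_1 x]] = -([v_{k+1}yx]+[v_{k+1}xy])$ and $[v_k xy [v_1 x]] = [v_{k+1}yx^2]+[v_{k+1}xyx]$. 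The type hypothesis on $L_{m+q-1}$ then collapses these combinations via $[v_{k+1}yx^{j}] = (\mu^{-1}-1)[v_{k+1}xyx^{j-1}]$; in particular $[v_{k+1}yx]+[v_{k+1}xy] = \mu^{-1}[v_{k+1}xy]$ and $[v_{k+1}yx^{2}]+[v_{k+1}xyx] = \mu^{-1}[v_{k+1}xyx]$.

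For the first equation I would expand $v_2=[v_1 xyx^{q-3}]$ through the generalized Jacobi identity, exactly as at the beginning of the proof of Lemma~\ref{lemma:v2}, to obtain
\[
[v_k v_2] = [v_k [v_1 xy] x^{q-3}] + 3[v_k x [v_1 xy] x^{q-4}] + R,
\]
where $R$ collects the summands with $x^i$ for $i\ge 2$ to the left of $[v_1 xy]$. Writing $[v_1 xy]=[[v_1 x]y]$ and applying the Jacobi identity converts each of the two main terms into a difference of two summands; those containing $[v_k [v_1 x]]$ or $[v_k x [v_1 x]]$ vanish outright, while the remaining contributions $-[v_k y [v_1 x] x^{q-3}]$ and $-3[v_k xy [v_1 x] x^{q-4}]$ evaluate to $\mu^{-1}v_{k+2}$ and $-3\mu^{-1}v_{k+2}$ respectively, with $v_{k+2}=[v_{k+1}xyx^{q-3}]$, summing to the claimed $-2\mu^{-1}v_{k+2}$. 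The remainder $R$ vanishes because each of its summands, after one more expansion of $[v_1 xy]$, contains either a factor $[v_k x^j y]$ with $2\le j\le q-3$, or a factor $[v_k x^j [v_1 x] y]$ where $[v_k x^j [v_1 x]]$ lies in one of $L_{m+q},\ldots,L_{m+2q-4}$; both are zero by the centralizer portion of Theorem~\ref{thm:distance}, applied once to $L_m$ and once to $L_{m+q-1}$.

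The remaining four equations are established analogously, starting from $v_k x$, $v_k y$, $v_k xy$ or $v_k xyx$ in place of $v_k$, and using the type relations to rewrite everything in terms of the basis $v_{k+2}$, $[v_{k+2}x]$, $[v_{k+2}y]$, $[v_{k+2}xy]$, $[v_{k+2}yx]$, $[v_{k+2}xyx]$, $[v_{k+2}yx^{2}]$ of the components near the diamond $L_{m+2q-2}$. The main obstacle is purely clerical: tracking the binomial coefficients produced by the generalized Jacobi identity (simplified modulo $p$ via Lucas' theorem), verifying that all tail terms vanish thanks to Theorem~\ref{thm:distance}, and combining the various $\mu^{-1}$-dependent contributions so as to match the asymmetric right-hand sides, notably the $(2\mu^{-1}+1)[v_{k+2}yx]$ and $(3\mu^{-1}+2)[v_{k+2}yx^{2}]$ in the last two equations. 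These asymmetric coefficients reflect the absence of the $[v_k x]\leftrightarrow [v_k y]$ symmetry that was available in the infinite-type case of Lemma~\ref{lemma:v2}.
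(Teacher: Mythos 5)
Your proposal is correct and follows essentially the same route as the paper: expand $[v_{k}v_{2}]$ by the generalized Jacobi identity keeping only the $i=0,1$ terms, rewrite $[v_{1}xy]=[[v_{1}x]y]$ and specialize Lemma~\ref{lemma:v1} and Corollary~\ref{cor:v1x} to the infinite-type case $\mu^{-1}=0$, then collapse via the finite-type relation for $v_{k+1}$ to get $\mu^{-1}v_{k+2}-3\mu^{-1}v_{k+2}=-2\mu^{-1}v_{k+2}$, exactly as in the paper's displayed computation. The paper, like you, only writes out the first identity and states that the remaining four follow by analogous expansions.
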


\begin{proof}
The first equation is
\begin{align*}
&[v_{k}v_{2}]=[v_{k}[v_{1}xy]x^{q-3}]+3[v_{k}x[v_{1}xy]x^{q-4}]\\
&=-[v_{k}y[v_{1}x]x^{q-3}]-3[v_{k}xy[v_{1}x]x^{q-4}]\\
&=-2([v_{k+1}yx^{q-2}]+v_{k+2})=-2\mu^{-1}v_{k+2}.
\end{align*}
The remaining equations can be proved similarly.
\end{proof}

\begin{rem}\label{rem:v2ext_mu_0}
In the excluded case $\mu=0$ we find
$[v_{k}v_{2}]=-2v_{k+2}$, $[v_{k}xv_{2}]=-[v_{k+2}x]$,
$[v_{k}yv_{2}]=-[v_{k+2}y]$, $[v_{k}xyv_{2}]=2[v_{k+2}yx]$ and $[v_{k}xyxv_{2}]=3[v_{k+2}yx^2]$.
\end{rem}

\section{Proof of Theorem~\ref{thm:main}}\label{sec:proof_main}

Define recursively $v_{k+1}=[v_{k}xyx^{q-3}]$ for $1\leq k \leq a-1$.
Further, define the element $v_{a+1}$ in degree
$(a+1)(q-1)$ as
\[
v_{a+1}=\begin{cases}
[v_{a}xyx^{q-3}] &\mbox{if } \mu \neq 0, \\
[v_{a}yx^{q-2}] &\mbox{otherwise}.
\end{cases}
\]
(However, $v_{a+1}$ will be redefined in the proof of assertion $4$.)
Note that $v_{k}$ has degree $k(q-1)$.
Theorem~\ref{thm:distance} can be inductively used
to show that $[L_{i},y]=0$ except when $i$ is congruent to $0$ or $1$ modulo $q-1$.

\subsection{Proving that $a$ is even}\label{subsec:a_even}

According to the first equation proved in Lemma~\ref{lemma:v1}
the recursive definition of the elements $v_{k}$, for $3\leq k \leq a$, can be replaced with the more compact formula
$v_{k}=[v_{2}v_{1}^{k-2}]$. In particular, this allow us, together with the equations in Lemmas~\ref{lemma:v1}, \ref{lemma:v2}, and~\ref{lemma:v2ext}, to compute
the adjoint action of any $v_{k}$ on homogeneous elements close to diamonds. A first instance of this type of
calculation occurs in the proof of the following result.

\begin{prop}\label{prop:a_odd}
Under the hypotheses of Theorem~\ref{thm:main}, if $a$ is odd then the relation
$
[v_{a}xy]+[v_{a}yx]=0
$
holds.
\end{prop}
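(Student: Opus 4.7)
The plan is to pin down the element $v_{a+1}\in L_{(a+1)(q-1)}$ by computing the bracket $[v_{2},v_{a-1}]$ in two compatible but superficially different ways, then translate the resulting vanishing of $v_{a+1}$ back to the desired identity $s_{a}:=[v_{a}xy]+[v_{a}yx]=0$. Since all intermediate diamonds $L_{k(q-1)+1}$ with $2\le k\le a-1$ are of infinite type, Lemma~\ref{lemma:v1} gives the compact formula $v_{k}=[v_{2}v_{1}^{k-2}]$, Lemma~\ref{lemma:v2} gives $[v_{k},v_{2}]=0$ whenever $2\le k\le a-2$, and Lemma~\ref{lemma:v2ext} applied with $k=a-1$ records the boundary identity $[v_{a-1},v_{2}]=-2\mu^{-1}v_{a+1}$ when $\mu\in\F^{\ast}$ (with Remark~\ref{rem:v2ext_mu_0} giving the analogue for $\mu=0$).

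Writing $D$ for the derivation $z\mapsto[z,v_{1}]$, so that $v_{a-1}=D^{a-3}(v_{2})$, I apply the iterated Leibniz identity
\[
[A,D^{n}(B)]=\sum_{j=0}^{n}(-1)^{j}\binom{n}{j}D^{n-j}\bigl([D^{j}(A),B]\bigr)
\]
with $A=B=v_{2}$ and $n=a-3$. Each summand with $j<a-3$ contains a factor $[v_{j+2},v_{2}]$ and vanishes by Lemma~\ref{lemma:v2}, while the $j=a-3$ summand equals $(-1)^{a-3}[v_{a-1},v_{2}]=(-1)^{a}\cdot 2\mu^{-1}v_{a+1}$ by Lemma~\ref{lemma:v2ext}. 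Antisymmetry of the bracket on the other hand gives $[v_{2},v_{a-1}]=-[v_{a-1},v_{2}]=2\mu^{-1}v_{a+1}$, and comparison of the two expressions yields
\[
\bigl((-1)^{a}-1\bigr)\cdot 2\mu^{-1}v_{a+1}=0.
\]
For $a$ odd the scalar $-4\mu^{-1}$ is nonzero (using $p>3$), hence $v_{a+1}=0$.

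By Theorem~\ref{thm:distance}, $y$ centralises each of the one-dimensional components $L_{a(q-1)+2},\ldots,L_{(a+1)(q-1)-1}$, so the covering property forces the iterated map $z\mapsto[zx^{q-3}]$ to restrict to an isomorphism from $L_{a(q-1)+2}$ onto $L_{(a+1)(q-1)}$. Hence $0=v_{a+1}=[v_{a}xy\,x^{q-3}]$ forces $[v_{a}xy]=0$, and the finite-type relation $\mu[v_{a}yx]=(1-\mu)[v_{a}xy]$ then forces $[v_{a}yx]=0$, giving $s_{a}=0$. The fake-type case $\mu=0$ is entirely parallel, with $v_{a+1}=[v_{a}y\,x^{q-2}]$ and $[v_{a}x]=0$ so that $[v_{a}xy]=0$ trivially while $[v_{a}yx]=0$ comes from the analogous isomorphism applied to $[v_{a}y]$.

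The main technical difficulty will be carrying out the Leibniz expansion carefully enough to confirm that no intermediate summand contributes beyond the boundary $j=a-3$ term, which requires verifying the hypotheses of Lemma~\ref{lemma:v2} at every index $k=j+2$ with $0\le j\le a-4$ and tracking the signs arising from both the $(-1)^{j}$ in the Leibniz formula and the explicit coefficient of $v_{a+1}$ in Lemma~\ref{lemma:v2ext}.
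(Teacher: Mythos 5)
Your computation up to the identity $\bigl((-1)^{a}-1\bigr)\cdot 2\mu^{-1}v_{a+1}=0$ checks out: the expansion of $[v_{2}[v_{2}v_{1}^{a-3}]]$ via the generalized Jacobi identity, the vanishing of the intermediate terms $[v_{j+2}v_{2}]$ by Lemma~\ref{lemma:v2} (infinite type, so $\mu^{-1}=0$), the boundary term $[v_{a-1}v_{2}]=-2\mu^{-1}v_{a+1}$ from Lemma~\ref{lemma:v2ext}, and the comparison with $-[v_{a-1}v_{2}]$ are all correct, so for $a$ odd you do obtain $v_{a+1}=0$. The gap is in the final descent. The covering property together with $[L_{i}y]=0$ makes each map $\ad x\colon L_{i}\to L_{i+1}$ \emph{surjective}, not injective; injectivity fails exactly when the target vanishes. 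So from $0=v_{a+1}=[v_{a}xyx^{q-3}]$ you may only conclude $L_{(a+1)(q-1)}=0$, i.e.\ that $L$ terminates before degree $(a+1)(q-1)$; you cannot conclude $[v_{a}xy]=0$. The claimed ``isomorphism from $L_{a(q-1)+2}$ onto $L_{(a+1)(q-1)}$'' is precisely what cannot be guaranteed here. The proposition asserts a relation in degree $a(q-1)+2$, where nonvanishing \emph{is} guaranteed (Definition~\ref{def:type} assigns a type only when $L_{m+1}\neq 0$), whereas your conclusion lives $q-3$ degrees higher, where nothing forces nonvanishing. Since Theorem~\ref{thm:main} allows $L$ to be finite-dimensional, $L_{(a+1)(q-1)}=0$ is not a contradiction of the hypotheses, so your argument proves neither the stated relation nor assertion~(a) of the theorem in general.

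The repair is to run the same antisymmetry argument one bracket ``earlier'' in degree: the paper expands $[v_{a-1}[v_{1}xy]]=-[v_{1}xyv_{a-1}]$ instead of $[v_{2}v_{a-1}]=-[v_{a-1}v_{2}]$. Replacing your second factor $v_{2}$ (degree $2q-2$) by $[v_{1}xy]$ (degree $q+1$) lands the identity exactly in degree $a(q-1)+2$; the same sign $(-1)^{a-3}$ appears, giving $2\bigl([v_{a}xy]+[v_{a}yx]\bigr)=0$ for $a$ odd with no detour through $v_{a+1}$.
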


\begin{proof}
We expand both sides of the equation
\[
[v_{a-1}[v_{1}xy]]=-[v_{1}xyv_{a-1}]
\]
using Corollary~\ref{cor:v1x} and Lemmas~\ref{lemma:v1} and~\ref{lemma:v2}.
The left-hand side is
\[
[v_{a-1}[v_{1}xy]]=[v_{a-1}[v_{1}x]y]-[v_{a-1}y[v_{1}x]]=-[v_{a-1}y[v_{1}x]]=[v_{a}yx]+[v_{a}xy].
\]
Expanding the right-hand side we have
\begin{align*}
&[v_{1}xyv_{a-1}]=[v_{1}xy[v_{2}v_{1}^{a-3}]=\sum_{i=0}^{a-3}(-1)^i\binom{a-3}{i}[v_{1}xyv_{1}^{i}v_{2}v_{1}^{a-3-i}]\\
&=\sum_{i=0}^{a-3}(-1)^i\binom{a-3}{i}[v_{1+i}xyv_{2}v_{1}^{a-3-i}]=(-1)^{a-3}[v_{a-2}xyv_{2}]\\
&=(-1)^{a-3}([v_{a}yx]+[v_{a}xy]).
\end{align*}
Having assumed $a$ odd we conclude that $[v_{a}yx]+[v_{a}xy]=0$.
\end{proof}

The relation $[v_{a}xy]+[v_{a}yx]=0$ proved in Proposition~\ref{prop:a_odd} contradicts the assumption that $L_{a(q-1)+1}$ has finite type. Hence
$a$ must be even, and Assertion~(a) of
Theorem~\ref{thm:main} is proved.

\subsection{The case where $a$ is not congruent to $1$ modulo $p$.}\label{subsec:a_not_equiv_1}

By hypothesis our algebra $L$ has a diamond of finite type $\mu \in \F$ in degree $a(q-1)+1$, for some even integer $a>2$,
and diamonds of infinite type in all
lower degrees congruent to $1$ modulo $q-1$, with the exception of $q$.
We also know, from Theorem~\ref{thm:distance}, that
$y$ centralizes all homogeneous component from $L_{a(q-1)+2}$ up to $L_{a(q-1)+q-2}$.
In this subsection we gather some information on $L$
up to the next diamond after $L_{a(q-1)+1}$
and a little past that.
In particular, this will provide a proof
of Assertion~(b) of Theorem~\ref{thm:main}.

In this subsection we assume
$L_{(a+1)(q-1)+3}\neq 0$.
Suppose first that $L_{a(q-1)+1}$ is a diamond of type $\mu \neq 1$.
Again according to Theorem~\ref{thm:distance}, the homogeneous component $L_{(a+1)(q-1)+1}$ is a diamond.
We now prove that it has infinite type,
which amounts to the vanishing of
$[v_{a+1}yx]+[v_{a+1}xy]$.
Because of the covering property, it suffices
to show that this element belongs to the centre.
Being necessarily centralized by $y$ because of
$(\ad y)^2=0$,
we only need to show $[v_{a+1}yx^2]=-[v_{a+1}xyx]$.

Since $[v_{2}yx]+[v_{2}xy]=0$ we have
\[
[v_{a-1}x[v_{2}yx]]+[v_{a-1}x[v_{2}xy]]=0.
\]
Assume first $\mu\neq 0$.
We expand both terms on the left-hand side  by means of Lemma~\ref{lemma:v2ext}.
The first term is
\begin{equation}\label{eq:v_a-1x_A}
\begin{aligned}[c]
[v_{a-1}x[v_{2}yx]]&=[v_{a-1}x[v_{2}y]x]=[v_{a-1}xv_{2}yx]-[v_{a-1}xyv_{2}x]\\
&=-(\mu^{-1}+1)[v_{a+1}xyx]-(2\mu^{-1}+1)[v_{a+1}yx^2],
\end{aligned}
\end{equation}
and the second term is
\begin{align*}
[v_{a-1}x[v_{2}xy]]&=[v_{a-1}x[v_{2}x]y]-[v_{a-1}xy[v_{2}x]]\\
&=[v_{a-1}xv_{2}xy]-[v_{a-1}xyv_{2}x]+[v_{a-1}xyxv_{2}]\\
&=[v_{a+1}xyx]+(\mu^{-1}+1)[v_{a+1}yx^2].
\end{align*}
Putting both terms together shows
$[v_{a+1}yx^2]=-[v_{a+1}xyx]$.
If $\mu=0$ we still consider the equation $[v_{a-1}x[v_{2}yx]]+[v_{a-1}x[v_{2}xy]]=0$.
According to Remark~\ref{rem:v2ext_mu_0}, the first term
on the left-hand side is
\begin{equation}\label{eq:v_a-1x_B}
\begin{aligned}[c]
[v_{a-1}x[v_{2}yx]]=[v_{a-1}xv_{2}yx]-[v_{a-1}xyv_{2}x]=-[v_{a+1}xyx]-2[v_{a+1}yx^{2}].
\end{aligned}
\end{equation}
The second term is
\[
[v_{a-1}x[v_{2}xy]]=[v_{a-1}xv_{2}xy]-[v_{a-1}xyv_{2}x]+[v_{a-1}xyxv_{2}]=[v_{a+1}yx^{2}].
\]
We deduce that $[v_{a+1}yx^2]=-[v_{a+1}xyx]$ also in this case.
Thus, we have shown that if $\mu\neq 1$ then the diamond
$L_{(a+1)(q-1)+1}$ has infinite type.


Now we consider the case $\mu=1$.
The equation
$[v_{a-1}x[v_{2}yx]]=-[v_{a-1}x[v_{2}xy]]$ yields
\[
[v_{a+1}x^2y]=-[v_{a+1}xyx]-[v_{a+1}yx^{2}].
\]
According to Theorem~\ref{thm:distance}
either $L_{(a+1)(q-1)+1}$ or $L_{(a+1)(q-1)+2}$ is a diamond.
If $[v_{a+1}y]\neq 0$, then we are in the former case,
and so we have  $[v_{a+1}xx]=0$.
Then the above equation tells us that
$[v_{a+1}xy]+[v_{a+1}yx]$ is centralized by $x$.
Hence the diamond
$L_{(a+1)(q-1)+1}$ has, again, infinite type.

Assume now $[v_{a+1}y]=0$, so that
$[v_{a+1}x^2y]=-[v_{a+1}xyx]$.
Note that this element spans $L_{(a+1)(q-1)+3}$
according to Theorem~\ref{thm:distance}.
We will show $a \equiv 0 \pmod{p}$.
Consider the equation $[v_{2}xy[v_{a-1}x]]=[v_{a-1}x[v_{2}yx]]$. We expand both
sides by means of Lemmas~\ref{lemma:v2} and~\ref{lemma:v2ext}. The right-hand side yields
\[
[v_{a-1}x[v_{2}yx]]=[v_{a-1}xv_{2}yx]-[v_{a-1}xyv_{2}x]=-2[v_{a+1}xyx].
\]
The left-hand side yields
$
[v_{2}xy[v_{a-1}x]]=[v_{2}xyv_{a-1}x]-[v_{2}xyxv_{a-1}].
$
The first term of the difference is
\begin{align*}
&[v_{2}xyv_{a-1}x]=[v_{2}xy[v_{2}v_{1}^{a-3}]x]=(a-3)[v_{a-2}xyv_{2}v_{1}x]-[v_{a-1}xyv_{2}x]\\
&=-(a-2)[v_{a+1}xyx].
\end{align*}
The second term of the difference is
\begin{align*}
&[v_{2}xyxv_{a-1}]=[v_{2}xyx[v_{2}v_{1}^{a-3}]]=(a-3)[v_{a-2}xyxv_{2}v_{1}]-[v_{a-1}xyxv_{2}]\\
&=2(a-3)[v_{a}xyxv_{1}]-2[v_{a+1}xyx]=-2(a-3)[v_{a+1}xyx]-2[v_{a+1}xyx],
\end{align*}
where we have used the identity $[v_{a}xyx v_{1}]=-2[v_{a+1}xyx]-[v_{a+1}x^2y]=-[v_{a+1}xyx]$.
Therefore, we have
\[
[v_{2}xy[v_{a-1}x]]=(a-2)[v_{a+1}xyx]
\]
and so
\[
a[v_{a+1}xyx]=0,
\]
whence $a \equiv 0 \pmod{p}$ as claimed.
Summarizing, in case $\mu=1$ we have found
that either $L_{(a+1)(q-1)+1}$ is a diamond of infinite type
(same as in the case $\mu\neq 1$),
or $L_{(a+1)(q-1)+2}$ is a diamond, in which
case $a \equiv 0 \pmod{p}$.

We now proceed to prove Assertion (b) of
Theorem~\ref{thm:main}.

\begin{prop}\label{prop:a_not_equiv_1}
Under the hypotheses of Theorem~\ref{thm:main}, if $a\not\equiv 1 \pmod{p}$ then
$L_{(a+1)(q-1)+3}=0$, unless
$a \equiv 0 \pmod{p}$, $\mu=1$, and $[L_{(a+1)(q-1)},y]=0$.
\end{prop}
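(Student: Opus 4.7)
The subsection preceding the proposition already establishes that, in every configuration falling outside the exceptional case stated, $L_{(a+1)(q-1)+1}$ is a diamond of infinite type. Consequently $[v_{a+1}xy]+[v_{a+1}yx]=0$, hence $[v_{a+1}xyx]+[v_{a+1}yx^2]=0$, and together with $(\ad y)^2=0$ and the covering property this forces $L_{(a+1)(q-1)+3}$ to be at most one-dimensional, spanned by $[v_{a+1}xyx]$. So it suffices to prove $[v_{a+1}xyx]=0$ whenever $a\not\equiv 1\pmod{p}$ and $L_{(a+1)(q-1)+1}$ is an infinite-type diamond.

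I would follow exactly the pattern used to derive $a[v_{a+1}xyx]=0$ in the preceding $\mu=1$, $[v_{a+1}y]=0$ subcase, namely compare the two sides of the identity
\[
[v_{2}xy\,[v_{a-1}x]]=[v_{a-1}x\,[v_{2}yx]],
\]
now under the hypothesis that $L_{(a+1)(q-1)+1}$ is of infinite type, so that $[v_{a+1}yx^2]=-[v_{a+1}xyx]$. The right-hand side expands by writing $[v_{a-1}x[v_2 yx]]=[v_{a-1}xv_{2}yx]-[v_{a-1}xyv_{2}x]$ and invoking Lemma~\ref{lemma:v2ext} with $\mu$ the type of $L_{a(q-1)+1}$; after collapsing $[v_{a+1}yx]=-[v_{a+1}xy]$ one finds $[v_{a-1}x\,[v_2yx]]=\mu^{-1}[v_{a+1}xyx]$ when $\mu\in\F^{\ast}$. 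The left-hand side is expanded by substituting $v_{a-1}=[v_{2}v_{1}^{a-3}]$ (valid since all intermediate diamonds are of infinite type) and applying the generalized Jacobi identity; using the relations $[v_{k}xyv_{1}]=[v_{k+1}xy]$ and $[v_{k}xyxv_{1}]=[v_{k+1}xyx]$ between consecutive infinite-type diamonds (consequences of Lemma~\ref{lemma:v1}), only the indices $i=a-4$ and $i=a-3$ of the resulting binomial sum contribute, because for $i\leq a-5$ the internal bracket $[v_{2+i}xyv_{2}]$ evaluates to $[v_{4+i}yx]+[v_{4+i}xy]=0$ by Lemma~\ref{lemma:v2}. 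Evaluating the two surviving terms by Lemma~\ref{lemma:v2} and Lemma~\ref{lemma:v2ext} respectively, and using $a$ even (established in Subsection~\ref{subsec:a_even}), one obtains $[v_{2}xy\,[v_{a-1}x]]=-(a-2)\mu^{-1}[v_{a+1}xyx]$.

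Equating the two sides then reduces to the single scalar relation $(a-1)\mu^{-1}[v_{a+1}xyx]=0$. For $\mu\neq 0$ the factor $\mu^{-1}$ is a unit, so $a\not\equiv 1\pmod{p}$ forces $[v_{a+1}xyx]=0$ and hence $L_{(a+1)(q-1)+3}=0$. The case $\mu=0$ is handled identically, using Remarks~\ref{rem:v1_mu_0}, \ref{rem:v2_mu_0}, and~\ref{rem:v2ext_mu_0} in place of the lemmas; the coefficient of $[v_{a+1}xyx]$ remains a nonzero multiple of $a-1$. The only situation not reachable by this argument is exactly the exception allowed in the proposition, where $L_{(a+1)(q-1)+1}$ fails to be an infinite-type diamond.

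The main obstacle is the bookkeeping in the left-hand-side expansion: one must carefully verify, using Theorem~\ref{thm:distance} and the covering property, that every one of the many interior terms $[v_{2}xyv_{1}^{i}v_{2}v_{1}^{a-3-i}]$ with $i\le a-5$ vanishes, and then accurately track the scalars $\mu^{-1}$, $2\mu^{-1}+1$, $3\mu^{-1}+2$ that appear when $v_{2}$ interacts with the finite-type diamond $L_{a(q-1)+1}$ or with the infinite-type diamond $L_{(a+1)(q-1)+1}$. Once this is done, the miraculous cancellation $(a-3)+2=a-1$ in the combined coefficient is the small arithmetic fact that makes the proposition work.
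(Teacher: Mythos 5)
Your proposal is correct and follows essentially the same route as the paper: the same key identity $[v_{2}xy[v_{a-1}x]]=[v_{a-1}x[v_{2}yx]]$, the same expansion of $v_{a-1}=[v_{2}v_{1}^{a-3}]$ with only the last two terms surviving, and the same final scalar relation $(a-1)\mu^{-1}[v_{a+1}xyx]=0$ (with the $\mu=0$ case handled via the remarks, as in the paper). The only difference is cosmetic: the paper phrases it contrapositively, assuming $L_{(a+1)(q-1)+3}\neq 0$ and deducing $a\equiv 1\pmod p$.
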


\begin{proof}
We keep with the previous assumption that $L_{(a+1)(q-1)+3}\neq 0$.
We have already proved that
if $[L_{(a+1)(q-1)},y]=0$ then
$a\equiv 0\pmod{p}$ and $\mu=1$,
and so we may assume
$L_{(a+1)(q-1)+1}$ to be a diamond of infinite type.
Hence $[v_{a+1}yx]=-[v_{a+1}xy]$,
and
$[v_{a+1}xyx]\neq 0$.

We consider the equation
\[
[v_{2}xy[v_{a-1}x]]=-[v_{a-1}x[v_{2}xy]] =[v_{a-1}x[v_{2}yx]]
\]
and expand the first and the last Lie products by means of Lemmas~\ref{lemma:v2} and~\ref{lemma:v2ext}.
We need to distinguish two cases according as to whether
$\mu$ equals zero or not.
According to Equations~\eqref{eq:v_a-1x_A}
and~\eqref{eq:v_a-1x_B} we have
$[v_{a-1}x[v_{2}yx]]=\mu^{-1}[v_{a+1}xyx]$ if $\mu\neq 0$,
and
$[v_{a-1}x[v_{2}yx]]=[v_{a+1}xyx]$ if $\mu=0$.
As to the other term  we have
\[
[v_{2}xy[v_{a-1}x]]=[v_{2}xyv_{a-1}x]-[v_{2}xyxv_{a-1}].
\]

If $\mu \neq 0$, the former term in this difference is
\begin{align*}
&[v_{2}xyv_{a-1}x]=[v_{2}xy[v_{2}v_{1}^{a-3}]x]
=(a-3)[v_{a-2}xyv_{2}v_{1}x]-[v_{a-1}xyv_{2}x]\\
&=(a-3)([v_{a}yxv_{1}x]+[v_{a}xyv_{1}x])-(2 \mu^{-1}+1)[v_{a+1}yx^2]-[v_{a+1}xyx]\\
&=(a-3)\mu^{-1}[v_{a+1}xyx]+2\mu^{-1}[v_{a+1}xyx]\\
&=(a-1)\mu^{-1}[v_{a+1}xyx].
\end{align*}
The latter term is
\begin{align*}
&[v_{2}xyxv_{a-1}]=[v_{2}xyx[v_{2}v_{1}^{a-3}]]
=(a-3)[v_{a-2}xyxv_{2}v_{1}]-[v_{a-1}xyxv_{2}]\\
&=2(a-3)([v_{a}yx^2v_{1}]+[v_{a}xyxv_{1}])-(3 \mu^{-1}+2)[v_{a+1}yx^2]-2[v_{a+1}xyx]\\
&=2(a-3)\mu^{-1}[v_{a+1}xyx]+3\mu^{-1}[v_{a+1}xyx]\\
&=(2a-3)\mu^{-1}[v_{a+1}xyx].
\end{align*}
Hence
$[v_{2}xy[v_{a-1}x]]=(-a+2)\mu^{-1}[v_{a+1}xyx]$,
and we find
\[
(a-1)\mu^{-1}[v_{a+1}xyx]=0,
\]
which implies $a\equiv 1\pmod{p}$ as desired.

If $\mu=0$ we have
\begin{align*}
&[v_{2}xyv_{a-1}x]=(a-3)[v_{a-2}xyv_{2}v_{1}x]-[v_{a-1}xyv_{2}x]\\
&=(a-3)[v_{a}yxv_{1}x]-2 [v_{a+1}yx^2]=(a-1)[v_{a+1}xyx],
\end{align*}
and
\begin{align*}
&[v_{2}xyxv_{a-1}]=(a-3)[v_{a-2}xyxv_{2}v_{1}]-[v_{a-1}xyxv_{2}]\\
&=2(a-3)[v_{a}yx^2v_{1}]-3[v_{a+1}yx^2]\\
&=2(a-3)[v_{a+1}xyx]+3[v_{a+1}xyx]=(2a-3)[v_{a+1}xyx],
\end{align*}
Putting terms together we find
\[
(a-1)[v_{a+1}xyx]=0,
\]
whence $a\equiv 1\pmod{p}$ in this case as well.
\end{proof}

Thus, we have proved Assertion~(b) of Theorem~\ref{thm:main}.
Note, in particular, under our assumption
$L_{(a+1)(q-1)+3}\neq 0$,
our argument has also shown that if $a\equiv 1\pmod{p}$ then
$L_{(a+1)(q-1)+1}$ is a diamond of infinite type.
This will serve as basis of induction in one
argument of the next subsection.

\subsection{If $a \equiv 1 \pmod{p}$ then $a-1$ is a power of $p$.}\label{subsec:a-1_power_p}
Let $p^{s}$ be the highest power of $p$ dividing $a-1$.
Thus, $a=mp^{s}+1$ with $m$ not a multiple of $p$.
In this subsection we prove that $L$ has finite dimension
unless $m=1$.
Thus, assume $m>1$.
We start with showing that the diamond $L_{a(q-1)+1}$
of finite type $\mu$ is followed by
a number of diamonds of infinite type,
an inductive argument which we will partly
reuse later in Section~\ref{sec:fin_pres}.
Then we will use that to show finite-dimensionality
of $L$ in Proposition~\ref{prop:a=1+p^s},
which amounts to the more precise Assertion~(c) of
Theorem~\ref{thm:main}.


Define recursively the elements
\[
v_{a+k}=[v_{a+k-1}xyx^{q-3}]
\quad \textrm{for $1<k\le p^s$,}
\]
which extends the corresponding definition for $k=1$.
We prove by induction on $k$ that
$[v_{a+k}x]$ and $[v_{a+k}y]$ span a diamond of type infinity, for  $1\le k\le p^s$,
unless the next homogeneous component equals zero.
According to Theorem~\ref{thm:distance} we only need to
show
\[
[v_{a+k}yx]+[v_{a+k}xy]=0
\quad \textrm{for $1\le k\le p^s$.}
\]


The case $k=1$ was proved in the previous subsection,
thus let $k>1$ and assume
the conclusion to hold up to $k-1$.
Because $a>p^{s}+1$ we have
$[v_{k+1}yx]=-[v_{k+1}xy]$ for $k\le p^s$.
Expanding each side of
\[
[v_{a-1}[v_{k+1}yx]]=-[v_{a-1}[v_{k+1}xy]]
\]
we obtain
\begin{equation}\label{eq:v_a+k}
[v_{a-1}v_{k+1}yx]+[v_{a-1}v_{k+1}xy]=2[v_{a-1}xv_{k+1}y]+2[v_{a-1}yv_{k+1}x].
\end{equation}
We distinguish two cases according to the value of $\mu$.
Suppose first $\mu \neq 0$.
Because $[v_{a-1}v_{k+1}]=-\mu^{-1}(k+1)v_{a+k}$,
$[v_{a-1}xv_{k+1}]=-\mu^{-1}[v_{a+k}x]$, and $[v_{a-1}yv_{k+1}]=-\mu^{-1}[v_{a+k}y]$, substituting
into Equation~\eqref{eq:v_a+k} we get
\[
(k-1)\bigl([v_{a+k}yx]+[v_{a+k}xy]\bigr)=0.
\]
We find the same conclusion when $\mu=0$,
because in that case a direct calculation shows
$[v_{a-1}v_{k+1}]=-(k+1)v_{a+k}$,
$[v_{a-1}xv_{k+1}]=-[v_{a+k}x]$, and $[v_{a-1}yv_{k+1}]=-[v_{a+k}y]$.
This proves the desired conclusion
$[v_{a+k}yx]+[v_{a+k}xy]=0$
as long as $k\not\equiv 1\pmod{p}$.

If $k \equiv 1 \pmod{p}$, write $k=hp^{t}+1$ where $p^t$ is the highest power of $p$ dividing
$k-1$, whence $h \not \equiv 0 \pmod p$.
Because $h\leq p^{s-t}-1$ we have $k+p^t\leq p^s+1$.
Since $a>p^s+1$ we have $[v_{k+p^t}yx]=-[v_{k+p^t}xy]$.
The equation
$[v_{a-p^{t}}[v_{k+p^t}yx]]=-[v_{a-p^{t}}[v_{k+p^t}xy]]$
yields
\begin{equation}\label{eq:k_cong_1}
[v_{a-p^t}v_{k+p^t}yx]+[v_{a-p^t}v_{k+p^t}xy]=2[v_{a-p^t}xv_{k+p^t}y]+2[v_{a-p^t}yv_{k+p^t}x].
\end{equation}
Once again we need to distinguish two cases
according to the value of $\mu$.
Assume first $\mu \neq 0$. We have
\begin{align*}
&[v_{a-p^t}v_{k+p^t}]=[v_{a-p^t}[v_{2}v_{1}^{k+p^t-2}]]\\
&=\binom{k+p^t-2}{p^t-1}[v_{a-1}v_{2}v_{1}^{k-1}]
-\binom{k+p^t-2}{p^t}[v_{a}v_{2}v_{1}^{k-2}]\\
&=-\mu^{-1}(2+h)v_{a+k},
\end{align*}
because
$
\binom{k+p^t-2}{p^t-1}=\binom{hp^{t}+p^{t}-1}{p^{t}-1}\equiv 1\pmod{p}$
and
$\binom{k+p^t-2}{p^t}=\binom{hp^{t}+p^{t}-1}{p^{t}}\equiv h\pmod{p}.$
Similarly, we have
\[
[v_{a-p^t}xv_{k+p^t}]=\binom{k+p^t-2}{p^t-1}[v_{a-1}xv_{2}v_{1}^{k-1}]
=-\mu^{-1}[v_{a+k}x],
\]
and
\[
[v_{a-p^t}yv_{k+p^t}]=\binom{k+p^t-2}{p^t-1}[v_{a-1}yv_{2}v_{1}^{k-1}]
=-\mu^{-1}[v_{a+k}y].
\]
Substituting into Equation~\eqref{eq:k_cong_1} we get
\[
h\bigl([v_{a+k}yx]+[v_{a+k}xy]\bigl)=0.
\]
We find the same conclusion when $\mu=0$ because
$[v_{a-p^t}v_{k+p^t}]=-(2+h)v_{a+k}$,
$[v_{a-p^t}xv_{k+p^t}]=-[v_{a+k}x]$ and $[v_{a-p^t}yv_{k+p^t}]=-[v_{a+k}y]$.

Thus, we have proved $[v_{a+k}yx]+[v_{a+k}xy]=0$
for $1\le k\le p^s$.
In particular, $[v_{a+p^s}yx]+[v_{a+p^s}xy]=0$ and
$[v_{a+p^s}yx]$ spans
$L_{(a+p^s)(q-1)+2}$.
In the next result, which amounts to Assertion~(c)
of~Theorem \ref{thm:main}, we show that
$[v_{a+p^s}yx]$
must actually vanish, under our present assumption $m>1$.

\begin{rem}
In case $m=1$, that is, when $a-1$ is a power of $p$,
one may still show that $[v_{a+k}x]$ and $[v_{a+k}y]$
span a diamond of infinite type for $1\le k< p^s$,
but the above argument would fail
because induction breaks down
whenever $k+p^t=p^s+1$.
However, in those cases one may use a different
calculation, as we will do
with a more general scope in the proof of Theorem~\ref{thm:fin_pres}.
\end{rem}

\begin{prop}\label{prop:a=1+p^s}
Under the hypotheses of Theorem~\ref{thm:main}, if $a\equiv 1 \pmod{p}$
but $a-1$ is not a power of $p$, then $L_{(a+p^s)(q-1)+2}=0$, where
$p^s$ is the highest power of $p$ which divides $a-1$.
\end{prop}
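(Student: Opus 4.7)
The plan is to reduce the proposition to showing $[v_{a+p^s}yx]=0$. Indeed, writing $a-1=mp^s$ with $m>1$ and $\gcd(m,p)=1$, the inductive argument in the paragraph preceding the proposition already yields $[v_{a+p^s}yx]+[v_{a+p^s}xy]=0$, so $[v_{a+p^s}yx]$ is the unique (up to scalar) potential spanning element of $L_{(a+p^s)(q-1)+2}$; hence its vanishing forces $L_{(a+p^s)(q-1)+2}=0$. Under the contrary assumption $L_{(a+p^s)(q-1)+2}\neq 0$ we may freely work with $v_{a+p^s}$ and the elements adjacent to it, and the task is to produce a \emph{second} linear relation at this degree.

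The second relation is obtained by setting up a Lie identity whose expansion on one hand duplicates the relation we already know and on the other hand introduces the $\mu^{-1}$-factor characteristic of the finite-type crossing at $v_a$. Concretely, choose an integer $t$ with $0\le t<s$ and consider the identity
\[
[v_{a-p^t}[v_{p^s+p^t}yx]]+[v_{a-p^t}[v_{p^s+p^t}xy]]=0,
\]
which holds because $p^s+p^t<a$ ensures that $L_{(p^s+p^t)(q-1)+1}$ is a diamond of infinite type. The degree on both sides is $(a+p^s)(q-1)+2$, the target degree. Expanding via the generalized Jacobi identity and iteratively applying Lemma~\ref{lemma:v1}, Corollary~\ref{cor:v1x}, Lemma~\ref{lemma:v2}, and Lemma~\ref{lemma:v2ext}, the right bracket is converted to a linear combination of terms of the forms $[v_{a+p^s}yx]$ and $[v_{a+p^s}xy]$. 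Using the already-known relation $[v_{a+p^s}yx]+[v_{a+p^s}xy]=0$ to collect these, one expects to arrive at an equation
\[
c\cdot[v_{a+p^s}yx]=0,
\]
where the coefficient $c$ is assembled from binomial coefficients (evaluated modulo $p$ by Lucas' theorem) and from the scalars $-2\mu^{-1}$, $\mu^{-1}$, $\mu^{-1}+1$ produced by the single passage through the finite-type diamond at $v_a$. Patterned on the computations leading to Equations~\eqref{eq:v_a+k} and~\eqref{eq:k_cong_1}, one anticipates $c\equiv m\cdot u\pmod p$ for some unit $u$; since $\gcd(m,p)=1$ by hypothesis, this yields $[v_{a+p^s}yx]=0$ as required.

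The main obstacle is the combinatorial bookkeeping of scalars across the finite-type boundary. When the Jacobi expansion and the identity $v_{k+1}=[v_2v_1^{k-1}]$ are used together, each occurrence of $v_a$ in an intermediate product contributes via Lemma~\ref{lemma:v2ext} (with its factor $-2\mu^{-1}$) rather than via Lemma~\ref{lemma:v2} (with its factor $-1$), and one must verify that precisely one such crossing occurs in every surviving summand so that the $\mu^{-1}$'s either collect uniformly or cancel into a scalar depending only on $m$ and $p$. Choosing $t$ to maximize cancellation (likely $t=s-1$, paralleling the role of $p^t$ in Equation~\eqref{eq:k_cong_1}) should make the resulting coefficient transparent. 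As in Propositions~\ref{prop:a_odd} and~\ref{prop:a_not_equiv_1}, the degenerate case $\mu=0$ is handled separately, with Remarks~\ref{rem:v1_mu_0}, \ref{rem:v1x_mu_0}, \ref{rem:v2_mu_0}, and~\ref{rem:v2ext_mu_0} replacing their parent Lemmas; the ensuing computation is formally simpler (no $\mu^{-1}$ appears), and one expects exactly the same coefficient $m$ to emerge.
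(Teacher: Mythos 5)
Your reduction of the statement to proving $[v_{a+p^s}yx]=0$ is correct and is exactly how the paper begins. But the mechanism you propose for producing the required second relation does not work, and the paper's actual device is of a genuinely different kind, which your proposal is missing. Consider your identity $[v_{a-p^t}[v_{p^s+p^t}yx]]+[v_{a-p^t}[v_{p^s+p^t}xy]]=0$. For $t=0$ this is precisely the identity already exploited in the inductive argument preceding the proposition (Equation~\eqref{eq:v_a+k} with $k=p^s$), and it yields only $(p^s-1)\bigl([v_{a+p^s}yx]+[v_{a+p^s}xy]\bigr)=0$, i.e.\ the relation you already have, with coefficient $-1$ rather than anything involving $m$. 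For $1\le t<s$ the identity is vacuous: expanding $v_{p^s+p^t}=[v_2v_1^{p^s+p^t-2}]$, the only summands $[v_{a-p^t}v_1^iv_2v_1^{p^s+p^t-2-i}]$ (and their $x$-, $y$-, $xy$-decorated variants) that could survive are those where $v_2$ crosses the finite-type diamond, namely $i=p^t-1$ and $i=p^t$; but by Lucas' theorem both $\binom{p^s+p^t-2}{p^t-1}$ and $\binom{p^s+p^t-2}{p^t}$ vanish modulo $p$ (the first because the units digit of $p^s+p^t-2$ is $p-2$, the second because its digit in position $t$ is $0$). So the expansion reads $0=0$, and the coefficient $c\equiv mu$ you anticipate never materialises. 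More fundamentally, every identity of the form you consider is a Jacobi-expansion of an already established relation $[v_kyx]+[v_kxy]=0$ against some $v_j$, and all such expansions annihilate only the combination $[v_{a+p^s}yx]+[v_{a+p^s}xy]$; they cannot isolate $[v_{a+p^s}yx]$.

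The paper's proof instead uses the alternating property of the Lie bracket: it sets $b=(h+1)p^s$ where $a-1=(2h+1)p^s$, forms the element $u=[v_bxyx^{(q-3)/2}]$ of exactly half the target degree $(a+p^s)(q-1)+2$, and expands $0=[uu]$ by the generalized Jacobi identity. This is an input not derivable from the previously proved relations, and it is exactly where the hypothesis $m>1$ (equivalently $h\neq 0$) enters, through the Lucas evaluations $\binom{b-2}{b-p^s}\equiv 1$ and $\binom{b-2}{b-p^s-1}\equiv 0 \pmod p$, which single out the term $[v_{b+1}v_b]=(-1)^{hp^s}\mu^{-1}v_{a+p^s}$ and lead to $[v_{a+p^s}yx]=0$. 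Without this (or some comparable) extra idea, your outline cannot close the argument.
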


\begin{proof}
We have $a=mp^s+1$ with $a$ even, whence
$m$ is odd. Written $m=2h+1$, assume $h\neq 0$.
We will prove that $[v_{a+p^s}yx]=0$. Set $b=(h+1)p^s$ and consider the
element
\[
u=[v_{b}xyx^{(q-3)/2}]
\]
in degree $\bigl((a+p^s)(q-1)+2\bigr)/2$.
We expand
\begin{align*}
0&=[uu]=[u[v_{b}xyx^{(q-3)/2}]]=\sum_{i=0}^{(q-3)/2}(-1)^i\binom{(q-3)/2}{i}[ux^i[v_{b}xy]x^{(q-3)/2-i}]
\\&=
\sum_{i=0}^{(q-3)/2}(-1)^i\binom{(q-3)/2}{i}[ux^i[v_{b}x]yx^{(q-3)/2-i}]
\\&\quad
-
\sum_{i=0}^{(q-3)/2}(-1)^i\binom{(q-3)/2}{i}[ux^{i}y[v_{b}x]x^{(q-3)/2-i}].
\end{align*}
All terms of the former sum in the final difference vanish
except, possibly, for $i=(q-5)/2$ and $i=(q-3)/2$.
All terms of the latter sum vanish except, possibly,
for $i=(q-3)/2$.
Consequently, we find
\begin{align*}
0=\pm[uu]&=\frac{q-3}{2}[v_{b}xyx^{q-4}v_{b}xyx]-
\frac{q-3}{2}[v_{b+1}v_{b}yx]-[v_{b+1}v_{b}xy]+
\\&\quad
+[v_{b+1}xv_{b}y]+[v_{b+1}yv_{b}x]+[v_{b+1}xyv_{b}x].
\end{align*}

We now expand the Lie products in the above equation by means of Lemmas~\ref{lemma:v2} and~\ref{lemma:v2ext}.
We start with
\[
[v_{b+1}v_{b}]=[v_{b+1}[v_{2}v_{1}^{b-2}]]=\sum_{i=0}^{b-2}(-1)^{i}\binom{b-2}{i}[v_{b+1}v_{1}^iv_{2}v_{1}^{b-2-i}].
\]
The Lie product in the sum vanishes with the possible exceptions of when $b+1+i=a-1$, whence $i=b-p^{s}-1=hp^{s}-1$,
or $b+1+i=a$, whence $i=b-p^s=hp^{s}$.
In the former case the binomial coefficient vanishes modulo $p$ because
$
\binom{b-2}{b-p^s-1}=\binom{hp^s+p^s-2}{(h-1)p^s+p^s-1}
$,
and in the latter case the binomial coefficient is congruent to $1$ modulo $p$ because
$
\binom{b-2}{b-p^s}=\binom{hp^s+p^s-2}{hp^s}.
$
Hence
$
[v_{b+1}v_{b}]=(-1)^{hp^s}\mu^{-1}v_{a+p^s}
$
if $\mu \neq 0$, and
$
[v_{b+1}v_{b}]=(-1)^{hp^s}v_{a+p^s}
$
otherwise.

Next, we expand $[v_{b+1}xv_{b}]$ and obtain
\[
[v_{b+1}xv_{b}]=[v_{b+1}x[v_{2}v_{1}^{b-2}]]=\sum_{i=0}^{b-2}(-1)^{i}\binom{b-2}{i}[v_{b+1}xv_{1}^iv_{2}v_{1}^{b-2-i}].
\]
The Lie product in the sum vanishes, except possibly when $i=b-p^{s}-1$.
However, in that case the binomial coefficient vanishes, hence $[v_{b+1}xv_{b}]=0$, irrespectively of the value of $\mu \in \F$.
Similarly, we find $[v_{b+1}yv_{b}]=0$ and
$[v_{b+1}xyv_{b}]=0$.

Finally, we expand
\[
[v_{b}xyx^{q-4}v_{b}]=[v_{b}xyx^{q-4}[v_{2}v_{1}^{b-2}]]=(-1)^{hp^{s}}[v_{a-1}xyx^{q-4}v_{2}v_{1}^{p^{s}-2}].
\]
We obtain $(-1)^{hp^{s}}2\mu^{-1}[v_{a+p^s-1}xyx^{q-4}]$ if $\mu \neq 0$,
and $(-1)^{hp^{s}}2[v_{a+p^s-1}xyx^{q-4}]$ otherwise.
Here we have used the equation $[v_{a-1}xyx^{q-4}v_{2}]=2\mu^{-1}[v_{a+1}xyx^{q-4}]$ if $\mu \neq 0$,
and the equation $[v_{a-1}xyx^{q-4}v_{2}]=2[v_{a+1}xyx^{q-4}]$ otherwise.
Both can be verified by direct calculation.

Substituting all the Lie products found
into the equation $0=[uu]$
as expanded above, we conclude
$
[v_{a+p^s}yx]=0
$
as desired.
\end{proof}

\subsection{If $a\equiv 0 \pmod{p}$ then $a$ equals twice a power of $p$ }\label{subsec:a_cong_0}
In this final subsection we prove Assertion~(d)
of Theorem~\ref{thm:main}.
Thus, suppose the next diamond
of finite type past $L_q$ is
$L_{a(q-1)+1}$ with $a \equiv 0\pmod{p}$.
According to Assertion~(b) of Theorem~\ref{thm:main},
the algebra $L$ is then finite-dimensional unless
$\mu=1$ and $[L_{(a+1)(q-1)},y]=0$.
Here we show that, assuming those conditions as well,
one still concludes that $L$ has finite dimension unless
$a$ equals twice a power of $p$.
The proof follows a similar pattern as that of
Assertion~(c), and begins with showing that
the diamond $L_{a(q-1)+1}$,
here of type $\mu=1$, is followed by
a certain number of diamonds of infinite type.
However, the equation $[L_{(a+1)(q-1)},y]=0$
forces $L_{(a+1)(q-1)+2}$ to be the first such diamond,
rather than $L_{(a+1)(q-1)+1}$ as before.
This shift in degree will require adapting
notation and calculations used previously.

Thus, assume $a \equiv 0\pmod{p}$,
$\mu=1$, and $[L_{(a+1)(q-1)},y]=0$.
Taking into account that $a$ is even,
write $a=2mp^s$ with $m$ not a multiple of $p$.
Hence $p^s$ is the highest power of $p$ dividing $a$. We will prove $L_{(a+p^s)(q-1)+3}=0$ when $m >1$.

We redefine the element $v_{a+1}$ as
\[
v_{a+1}=[v_{a}xyx^{q-2}].
\]
Note that here $v_{a+1}$ spans $L_{(a+1)(q-1)+1}$,
rather than $L_{(a+1)(q-1)}$ as previously.
According to Theorem~\ref{thm:distance}
the component $L_{(a+1)(q-1)+2}$ is a diamond,
and in Subsection~\ref{subsec:a_not_equiv_1}
we have actually proved that it has infinite type,
hence $[v_{a+1}yx]=-[v_{a+1}xy]$.

Define recursively the elements
\[
v_{a+k}=[v_{a+k-1}xyx^{q-3}] \quad
\textrm{for $1<k\le p^s$}.
\]
We will prove by induction on $k$ that
$[v_{a+k}x]$ and $[v_{a+k}y]$ span a diamond of type infinity, for  $1\le k\le p^s$,
unless the next homogeneous component equals zero.
According to Theorem~\ref{thm:distance} we only need to
show
\begin{equation}\label{eq:v_a+k_new}
[v_{a+k}yx]+[v_{a+k}xy]=0
\quad \textrm{for $1\le k\le p^s$.}
\end{equation}

Before that we need to review our results in Section~\ref{sec:main} on the adjoint action of $v_{1}$
and $v_{2}$ on the elements close to the diamond $L_{a(q-1)+1}$, taking into account the updated definition of the element $v_{a+1}$.
As an immediate consequence of Lemmas~\ref{lemma:v1} and~\ref{lemma:v2ext}, we have
$[v_{a}xv_{1}]=v_{a+1}$,
$[v_{a}xyv_{1}]=-[v_{a+1}y]$, $[v_{a-1}xv_{2}]=-v_{a+1}$, $[v_{a-1}xyv_{2}]=[v_{a+1}y]$, and $[v_{a-1}xyxv_{2}]=2[v_{a+1}yx]=-2[v_{a+1}xy]$.

\begin{lemma}\label{lemma:v_a+1}
Under the above hypotheses we have
\begin{align*}
&[v_{a}xyxv_{1}]=[v_{a+1}xy],\\
&[v_{a}xv_{2}]=v_{a+2},\\
&[v_{a}xyv_{2}]=0\\
&[v_{a}xyxv_{2}]=[v_{a+2}yx]+[v_{a+2}xy].
\end{align*}
\end{lemma}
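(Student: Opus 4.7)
The four identities are direct analogues of the calculations in Lemmas~\ref{lemma:v1}–\ref{lemma:v2ext}, adapted to the current situation, where the diamond $L_{a(q-1)+1}$ has type $\mu=1$ (so $[v_axx]=[v_ayy]=[v_ayx]=0$), the component $L_{(a+1)(q-1)}$ is one-dimensional and centralized by $y$ by hypothesis, and the next diamond $L_{(a+1)(q-1)+2}$ has infinite type (so $[v_{a+1}yx]=-[v_{a+1}xy]$). The redefined $v_{a+1}=[v_axyx^{q-2}]$ now spans $L_{(a+1)(q-1)+1}$ rather than $L_{(a+1)(q-1)}$, so each output must be identified inside the diamond $L_{(a+1)(q-1)+2}$ or the subsequent component.

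To prove $[v_axyxv_1]=[v_{a+1}xy]$, I would expand $v_1=[yx^{q-2}]$ via the generalized Jacobi identity, writing $[v_axyx[yx^{q-2}]]$ as a sum of $q-1$ terms of the form $(-1)^i\binom{q-2}{i}[v_axyx^{i+1}yx^{q-2-i}]$. Since $y$ centralizes $L_j$ for $a(q-1)+2\le j\le (a+1)(q-1)$ — the range $a(q-1)+2,\ldots,a(q-1)+q-2$ coming from Theorem~\ref{thm:distance} and the endpoint $(a+1)(q-1)$ coming from the hypothesis $[L_{(a+1)(q-1)},y]=0$ — only the two top terms $i=q-3$ and $i=q-2$ survive, with binomial coefficients $q-2\equiv-2$ and $1$ modulo $p$. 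Putting the appropriate signs, the total is $-2[v_{a+1}yx]-[v_{a+1}xy]$, which collapses to $[v_{a+1}xy]$ using the infinite-type relation.

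For $[v_axv_2]$, $[v_axyv_2]$, and $[v_axyxv_2]$, the strategy is the same: expand $v_2=[v_1xyx^{q-3}]$ (and the inner $v_1$ when needed) by generalized Jacobi, discard the numerous terms that vanish thanks to $[v_axx]=0$, $[v_ayx]=0$, the $y$-centralization just described, and $(\ad y)^2=0$, and then reassemble the surviving terms using the identities already recorded in the paragraph preceding the lemma ($[v_axv_1]=v_{a+1}$, $[v_axyv_1]=-[v_{a+1}y]$, $[v_{a-1}xv_2]=-v_{a+1}$, $[v_{a-1}xyv_2]=[v_{a+1}y]$, $[v_{a-1}xyxv_2]=-2[v_{a+1}xy]$) together with the infinite-type relation at $L_{(a+1)(q-1)+2}$. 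The main obstacle is bookkeeping: each expansion leaves a small number of index positions whose binomial coefficients must be reduced modulo $p$ by Lucas' theorem and whose surviving Lie monomials must be tracked to their expression in $v_{a+1}$ or $v_{a+2}$. Beyond this purely mechanical work no new idea is needed, and the proofs follow exactly the template of Lemmas~\ref{lemma:v1}–\ref{lemma:v2ext}.
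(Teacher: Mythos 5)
Your proposal is correct and follows essentially the same route as the paper: the paper's proof expands $v_2=[v_1xyx^{q-3}]$ via the generalized Jacobi identity for the second identity, kills the extra terms using $[v_axx]=0$, the $y$-centralization of $L_{a(q-1)+2},\dots,L_{(a+1)(q-1)}$, and $[v_{a+1}yx]+[v_{a+1}xy]=0$, and then declares the remaining identities "similar". Your explicit computation of the first identity (surviving indices $i=q-3,q-2$ with coefficients $-2$ and $1$, collapsing via the infinite-type relation) is accurate, and your outlined bookkeeping for the other three is exactly what the paper's "similar" conceals.
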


\begin{proof}
The second equation is
\begin{align*}
&[v_{a}xv_{2}]=[v_{a}x[v_{1}xyx^{q-3}]]=[v_{a}x[v_{1}xy]x^{q-3}]\\
&=[v_{a}x[v_{1}x]yx^{q-3}]=[v_{a+1}xyx^{q-3}]\\
&=v_{a+2}
\end{align*}
where we have used $[v_{a}xy[v_{1}x]]=[v_{a}xy[yx^{q-1}]]=[v_{a+1}yx]+[v_{a+1}xy]=0$
and $[v_{a}x[v_{1}x]]=[v_{a+1}x]$.
The remaining equations are similar.
\end{proof}

We now begin an inductive proof of
Equation~\eqref{eq:v_a+k_new}.
The case $k=1$ was known from previous subsections,
thus let $k>1$ and assume
the conclusion to hold up to $k-1$.
Note that $[v_{k}yx]+[v_{k}xy]=0$ for every $k$ in the range under consideration.
Expanding the equation
\[
[v_{a}x[v_{k}yx]]+[v_{a}x[v_{k}xy]]=0
\]
we obtain
$
[v_{a}xv_{k}yx]-2[v_{a}xyv_{k}x]+[v_{a}xv_{k}xy]+[v_{a}xyxv_{k}]=0.
$
Since $[v_{a}xv_{k}]=[v_{a}x[v_{2}v_{1}^{k-2}]]=[v_{a}xv_{2}v_{1}^{k-2}]=v_{a+k}$, $[v_{a}xyv_{k}]=0$ and
\[
[v_{a}xyxv_{k}]=(-1)^k[v_{a}xyxv_{1}^{k-2}v_{2}]
=(-1)^k([v_{a+k}yx]+[v_{a+k}xy])
\]
we conclude
\[
\bigl(1+(-1)^k\bigr)\bigl([v_{a+k}yx]+[v_{a+k}xy]\bigl)=0,
\]
which gives the desired conclusion as long as $k$ is even.

The case where $k$ is odd, as we assume from now on,
is harder and requires longer arguments.
We start with expanding
\[
[v_{a-1}x[v_{k+1}yx]]+[v_{a-1}x[v_{k+1}xy]]=0,
\]
thus obtaining
$
[v_{a-1}xv_{k+1}yx]-2[v_{a-1}xyv_{k+1}x]+[v_{a-1}xv_{k+1}xy]+[v_{a-1}xyxv_{k+1}]=0.
$
We have
\begin{align*}
[v_{a-1}xv_{k+1}]
&=[v_{a-1}x[v_{2}v_{1}^{k-1}]]
=[v_{a-1}xv_{2}v_{1}^{k-1}]-(k-1)[v_{a-1}xv_{1}v_{2}v_{1}^{k-2}]\\
&=-v_{a+k}-(k-1)v_{a+k}
=-kv_{a+k},
\end{align*}
and $[v_{a-1}xyv_{k+1}]=[v_{a-1}xyv_{2}v_{1}^{k-1}]=[v_{a+k}y]$. Furthermore, taking into account that $k$ is odd, we have
\begin{align*}
[v_{a-1}xyxv_{k+1}]
&=[v_{a-1}xyxv_{2}v_{1}^{k-1}]+[v_{a-1}xyxv_{1}^{k-1}v_{2}]\\
&=2\bigl(2[v_{a+k}yx]+[v_{a+k}xy]\bigr)-2\bigl([v_{a+k}yx]+[v_{a+k}xy]\bigr)\\
&=2[v_{a+k}yx].
\end{align*}
Hence we find
\[
k\bigl([v_{a+k}yx]+[v_{a+k}xy]\bigr)=0,
\]
which yields the desired conclusion as long as
$k$ is not a multiple of $p$.

If $k$ is a multiple of $p$
(and is odd as throughout),
write $k=hp^t$ with $h$ not a multiple of $p$.
Note that $[v_{k+p^t}yx]+[v_{k+p^t}xy]=0$.
In particular, when $k=p^s$ we have
$[v_{2p^s}yx]+[v_{2p^s}xy]=0$, since we are assuming $a>2p^s$.
Expanding
$
[v_{a-p^t}x[v_{k+p^t}yx]]+[v_{a-p^t}x[v_{k+p^t}xy]]=0
$
we find
\[
[v_{a-p^t}xv_{k+p^t}yx]-2[v_{a-p^t}xyv_{k+p^t}x]+[v_{a-p^t}xv_{k+p^t}xy]+[v_{a-p^t}xyxv_{k+p^t}]=0.
\]
Consider now the individual terms.
We have
\begin{align*}
[v_{a-p^t}xv_{k+p^t}]
&=[v_{a-p^t}x[v_{2}v_{1}^{k+p^t-2}]]\\
&=\sum_{i=0}^{k+p^t-2}(-1)^i \binom{k+p^t-2}{i}[v_{a-p^t}xv_{1}^{i}v_{2}v_{1}^{k+p^t-2-i}]\\
&=-h [v_{a-p^t}xv_{1}^{p^t}v_{2}v_{1}^{k-2}]\\
&=-hv_{a+k},
\end{align*}
where we have used
$
\binom{k+p^t-2}{p^t-1}=\binom{hp^t+p^t-2}{p^t-1}\equiv 0\pmod{p}
$
and
$
\binom{k+p^t-2}{p^t}\equiv h\pmod{p}.
$
Similarly, we have
\[
[v_{a-p^t}xyv_{k+p^t}]=-[v_{a-p^t}xyv_{1}^{p^t-2}v_{2}v_{1}^{k}]=[v_{a+k}y],
\]
because $\binom{k+p^t-2}{p^t-2}\equiv 1\pmod{p}$.
Finally, considering that $k+p^t-2$ is even, we have
\begin{align*}
[v_{a-p^t}xyxv_{k+p^t}]
&=-[v_{a-p^t}xyxv_{1}^{p^t-2}v_{2}v_{1}^{k}]+[v_{a-p^t}xyxv_{1}^{k+p^t-2}v_{2}]
\\&=2[v_{a+k}yx].
\end{align*}
We conclude
$
h\bigl([v_{a+k}yx]+[v_{a+k}xy]\bigr)=0.
$

Thus, we have proved $[v_{a+k}yx]+[v_{a+k}xy]=0$ for $2\le k\le p^s$.
We are now ready to prove Assertion (d) of
Theorem~\ref{thm:main}, which we state again
as follows.

\begin{prop}
Under the hypotheses of Theorem~\ref{thm:main},
if $a\equiv 0\pmod{p}$, $\mu=1$, $[L_{(a+1)(q-1)},y]=0$, and $a/2$ is not a power of $p$,
then $L_{(a+p^s)(q-1)+3}=0$, where $p^s$ is the highest power of $p$ which divides $a$.
\end{prop}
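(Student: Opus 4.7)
The plan is to argue by contradiction, mirroring the proof of Proposition~\ref{prop:a=1+p^s} with adjustments for the degree shift that governs the setting of Assertion~(d).  First, I will set $b=(a+p^s-1)/2=mp^s+(p^s-1)/2$, which is a positive integer strictly less than $a$ since $(2m+1)p^s$ is odd and $m\ge 1$, and consider the element $u=[v_b xyx^{(q-3)/2}]$ of degree $d=b(q-1)+(q+1)/2$.  A direct computation gives $2d=(a+p^s)(q-1)+2$, which under the shifted convention of this subsection is the degree of the (putative) diamond $L_{(a+p^s)(q-1)+2}$, lying exactly one below the target degree $(a+p^s)(q-1)+3$.

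Assuming for contradiction that $L_{(a+p^s)(q-1)+3}\neq 0$, the inductive argument preceding the proposition---which established $[v_{a+k}yx]+[v_{a+k}xy]=0$ for $1\le k\le p^s$---forces $L_{(a+p^s)(q-1)+2}$ to be a genuine two-dimensional diamond of infinite type with basis $\{[v_{a+p^s}x],\,[v_{a+p^s}y]\}$.  The identity $[u,u]=0$ therefore lives in this two-dimensional diamond, and expanding it via the generalized Jacobi identity will produce a relation of the form $\alpha\,[v_{a+p^s}x]+\beta\,[v_{a+p^s}y]=0$. I will then evaluate the arising Lie products $[v_{b+1}v_b]$, $[v_{b+1}xv_b]$, $[v_{b+1}yv_b]$, $[v_{b+1}xyv_b]$, $[v_b xyx^{q-4}v_b]$ and their immediate variants by means of Lemmas~\ref{lemma:v1}, \ref{lemma:v2}, \ref{lemma:v2ext} and~\ref{lemma:v_a+1}, using Lucas' theorem to identify the single surviving summation index in each.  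The new feature with respect to the proof of Assertion~(c) is that the chain of $v_k$'s connecting $v_b$ to $v_{a+p^s}$ now crosses the type-$1$ diamond in degree $a(q-1)+1$, so that Lemma~\ref{lemma:v2ext} with $\mu=1$, together with the redefinition of $v_{a+1}$, intervenes in each evaluation.

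After collecting the surviving contributions I expect the identity $[u,u]=0$ to collapse to a relation of the shape $\gamma\,[v_{a+p^s}y]=0$ (or an analogous one involving $[v_{a+p^s}x]$), where $\gamma$ is a scalar that vanishes precisely when $m=1$---this aligning with the realization of the $m=1$ case by the Young algebra $\mathcal{T}_{q,2}(M)$ with first constituent of length $2p^s$.  Under the hypotheses $m>1$ and $m\not\equiv 0\pmod{p}$ the coefficient $\gamma$ is nonzero, so the relation is nontrivial and contradicts the two-dimensionality of the diamond.  The contradiction forces $L_{(a+p^s)(q-1)+3}=0$.  The main technical difficulty will be the explicit evaluation of the Lie products listed above at the present value $b=mp^s+(p^s-1)/2$: the chain of diamonds between $v_b$ and $v_{a+p^s}$ contains two distinct kinds of transitions (past the type-$1$ diamond and past the subsequent infinite-type diamonds), whose combined binomial effects via Lucas' theorem must conspire to produce the exceptional vanishing at $m=1$.
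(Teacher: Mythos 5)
Your plan is not the paper's argument, and as written it has a genuine gap: everything hinges on the unperformed evaluation of $[uu]=0$, namely on your \emph{expectation} that it collapses to $\gamma\,[v_{a+p^s}y]=0$ with $\gamma\neq 0$ precisely when $m>1$. That expected collapse is the entire content of the proposition and is nowhere verified. Moreover, carrying it out is substantially harder here than in the proof of Proposition~\ref{prop:a=1+p^s} that you are imitating: past the type-$1$ diamond the chain is shifted by one degree ($v_{a+1}=[v_{a}xyx^{q-2}]$ lies in degree $(a+1)(q-1)+1$), so Lemmas~\ref{lemma:v1}, \ref{lemma:v2} and~\ref{lemma:v2ext} do not apply verbatim to the brackets your expansion produces; you would need further identities of the kind collected in Lemma~\ref{lemma:v_a+1} (for instance $[v_{a}v_{1}]$, $[v_{a}v_{2}]$, $[v_{a+j}v_{2}]$, and the action of $v_{1},v_{2}$ on the chain elements sitting one degree below the shifted $v$'s), none of which you compute. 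In addition, your $b=mp^s+(p^s-1)/2$ is not a multiple of $p^s$, so the Lucas-type analysis of surviving indices is genuinely different from case~(c), and it is not clear a priori that the resulting coefficients of $[v_{a+p^s}x]$ and $[v_{a+p^s}y]$ do not vanish identically: an expansion of the trivial identity $[uu]=0$ can perfectly well collapse to $0=0$. The degree bookkeeping ($2\deg u=(a+p^s)(q-1)+2$) and the consistency check against the case $m=1$ are fine, but they do not substitute for the computation.

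You also miss the feature that makes case~(d) much easier than case~(c): here the earliest finite-type diamond has type $1$, hence is fake, so $[v_{a}y]=0$ (this is exactly what $\mu=1$ together with $[L_{(a+1)(q-1)},y]=0$ provides). The paper simply expands $0=[v_{p^s}x[v_{a}y]]=[v_{p^s}xv_{a}y]-[v_{p^s}xyv_{a}]$, writes $v_{a}=[v_{2}v_{1}^{a-2}]$, and identifies the surviving indices by Lucas' theorem, obtaining $-[v_{a+p^s}y]$ and $(2m-1)[v_{a+p^s}y]$ for the two terms; hence $2m[v_{a+p^s}y]=0$, and since $2m\not\equiv 0\pmod p$ this gives $[v_{a+p^s}y]=0$ directly. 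Then $L_{(a+p^s)(q-1)+3}=0$ because its spanning products $[v_{a+p^s}xx]$, $[v_{a+p^s}yy]$ and $[v_{a+p^s}yx]=-[v_{a+p^s}xy]$ all vanish; no contradiction argument and no $[uu]$ device are needed. To salvage your route you would have to carry out all the evaluations you list and exhibit a nonzero coefficient; until then the proof is incomplete.
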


\begin{proof}
Because $[v_{a+p^s}yx]+[v_{a+p^s}xy]=0$,
the desired conclusion
$L_{(a+p^s)(q-1)+3}=0$
now amounts to $[v_{a+p^s}yx]=0$.
That will follow if we show the stronger
equation $[v_{a+p^s}y]=0$.
We expand
\[
0=[v_{p^s}x[v_{a}y]]=[v_{p^s}xv_{a}y]-[v_{p^s}xyv_{a}].
\]
The former Lie product in this difference is
\[
[v_{p^s}xv_{a}y]=[v_{p^s}x[v_{2}v_{1}^{a-2}]y]=-[v_{p^s}xv_{1}^{a-p^s}v_{2}v_{1}^{p^s-2}y]=-[v_{a+p^s}y],
\]
where we have used
$
\binom{a-2}{a-1-p^s}
\equiv 0\pmod{p}
$
and
$
\binom{a-2}{a-p^s}
\equiv 1\pmod{p}.
$
The latter Lie product in the difference is
\[
[v_{p^s}xyv_{a}]=-(2m-1)[v_{p^s}xyv_{1}^{a-2-p^s}v_{2}v_{1}^{p^s}]=(2m-1)[v_{a+p^s}y],
\]
where we have used
$
\binom{a-2}{a-2-p^s}
\equiv 2m-1\pmod{p}.
$
We obtain
$
2m[v_{a+p^s}y]=0,
$
which yields the desired conclusion.
\end{proof}

\section{Nottingham algebras with diamonds of finite and infinite type}\label{sec:fin_pres}

Let $L$ be an infinite-dimensional Nottingham  algebra with second diamond
$L_q$ and standard generators $x$ and $y$. Suppose that $L$ has diamonds of infinite type in all
degrees $k(q-1)+1$ for $1<k\leq p^s$, where $s\geq 1$, and a diamond of finite type $\lambda\in \F$, with
$\lambda \neq 0$, in degree
$(p^s+1)(q-1)+1$.

In this section we prove that $L$ is uniquely determined by these prescriptions. It has diamonds in all degrees
of the form $t(q-1)+1$. If $t\not \equiv 1 \pmod {p^s}$ the corresponding diamond is of infinite type.
If $t\equiv 1 \pmod {p^s}$, say $t=rp^s+1$, the corresponding diamond has finite type $r\lambda +r-1$.
The diamonds of finite type
(including the fake ones if $\lambda\in \F_{p}$)
follow an arithmetic progression.
We prove this uniqueness result by showing that if a Lie algebra $N$ is defined
by a finite presentation encoding part of the above prescriptions (that is, up to specifying the type of the second
diamond of finite type), then the quotient $L$ of $N$ modulo its centre is a Nottingham algebra and has the structure
stated above.


\begin{theorem}\label{thm:fin_pres}
Let $q>5$ be a power of a prime $p>3$, let $\F$ a field of characteristic $p$.
Fix $\lambda \in \F^{\ast}$, and a positive integer $s$.
Let $N=\bigoplus_{i=1}^{\infty}N_{i}$ be the Lie algebra over $\F$ on two generators $x$ and $y$
subject to the following relations, and graded assigning degree $1$ to $x$ and $y$,
where $v_{k}$ is defined recursively by $v_{1}=[yx^{q-2}]$ and
$v_{k}=[v_{k-1}xyx^{q-3}]$ for $k>1$:
\begin{align*}
&[yx^{i}y]=0 & &\textrm{for $0< i < q-2$,}\\
&[v_{1}xx]=0=[v_{1}yy],\quad
[v_{1}yx]=-2[v_{1}xy], &\\
&[v_{1}yx^iy]=0  & &\textrm{for $0< i < q-2$,}\\
&[v_{k}yx]+[v_{k}xy]=0 &  &\textrm{for $2\leq k \leq p^{s}$ with $k$ even,}\\
&\lambda [v_{p^s+1}yx]=(1-\lambda)[v_{p^s+1}xy]
\end{align*}
Then $N/Z(N)$ is a Nottingham algebra and has the diamond structure described above in the text.
\end{theorem}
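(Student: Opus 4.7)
The plan is to combine the existence of the target algebra with an inductive upper bound on $N$. The algebras with the diamond structure described in the theorem's preamble are known to exist by Theorem~\ref{thm:k=q}(d): there is an infinite-dimensional Nottingham algebra $L^{\ast}$ having diamonds in every degree congruent to $1$ modulo $q-1$, of infinite type except in degrees $\equiv q \pmod{p^s(q-1)}$, where the finite types follow the arithmetic progression $r\lambda+r-1$. I would first verify directly that $L^{\ast}$ satisfies every relation defining $N$, thus obtaining a surjective homomorphism $\pi \colon N \to L^{\ast}$. Since $L^{\ast}$ is an infinite-dimensional thin Lie algebra, its centre is trivial, so $\pi$ factors through a surjection $N/Z(N) \twoheadrightarrow L^{\ast}$.

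The main task is then to establish the reverse inequality $\dim N_i \le \dim L^{\ast}_i$ for every $i$. I would proceed by induction on $i$, carrying the stronger hypothesis that each graded component has an explicit spanning set of the predicted size. For $2\le i\le q-1$ the relations $[yx^iy]=0$ immediately force $\dim N_i = 1$, spanned by $[yx^{i-1}]$, and the relations at $v_{1}$ fix the structure of the second diamond. The engine for propagating through higher degrees is furnished by the calculations of Section~\ref{sec:calculations}: Lemmas~\ref{lemma:v1}, \ref{lemma:v2}, \ref{lemma:v2ext} and their companion remarks translate the defining relations at one diamond into constraints on neighbouring components via the generalized Jacobi identity, with Lucas's theorem controlling the binomial coefficients that appear.

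The crucial inductive step asserts that once the structure is known up to and including the finite-type diamond at degree $(rp^s+1)(q-1)+1$, the next $p^s-1$ two-dimensional components must be diamonds of infinite type, and the subsequent diamond of finite type occurs at $((r+1)p^s+1)(q-1)+1$ with type $(r+1)\lambda+r$. The infinite-type claim can be handled by adapting the argument of Subsection~\ref{subsec:a-1_power_p}, expanding identities of the form $[v_{a-p^t}[v_{k+p^t}yx]] + [v_{a-p^t}[v_{k+p^t}xy]]=0$ and tracking the binomial coefficients modulo $p$. The type determination at the next finite-type position is the delicate point: the earlier induction breaks down precisely when $k+p^t$ reaches that position, so a separate identity (in the spirit of the remark preceding Proposition~\ref{prop:a=1+p^s}) must be derived to produce the correct coefficient $(r+1)\lambda+r$.

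The main obstacle will be the bookkeeping in this inductive step, both in tracking the correct element $v_k$ through each block (with the slight redefinition needed at fake diamonds of type $0$ that arise when $\lambda\in\F_p$) and in invoking Convention~\ref{convention:fake} without losing the uniform description. Once this loop closes, combining the bound $\dim N_i \le \dim L^{\ast}_i$ with the surjection $N/Z(N)\twoheadrightarrow L^{\ast}$ yields an isomorphism, so $N/Z(N)$ is a Nottingham algebra with exactly the prescribed diamond structure.
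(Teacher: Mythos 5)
Your overall strategy coincides with the paper's: the existence half (a surjection from $N$ onto a known algebra with the prescribed structure, factoring through $N/Z(N)$ because that algebra is centreless) comes from Theorem~\ref{thm:k=q}(d), and the substance of the proof is an inductive upper bound on the graded components of $N$ modulo its centre, driven by the generalized Jacobi identity, Lucas' theorem, and the lemmas of Section~\ref{sec:calculations}, organised in blocks of $p^s$ diamonds exactly as you describe. However, there is a genuine gap in your plan. Your induction needs, in every block, that $y$ centralizes the $q-3$ components strictly between consecutive diamonds, i.e.\ $[L_{(k-1)(q-1)+1+i}y]=0$ for $0<i<q-2$; this is what makes those components one-dimensional and is a standing hypothesis of Lemmas~\ref{lemma:v1}--\ref{lemma:v2ext}, so it cannot be an output of those lemmas. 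The presentation of $N$ only imposes such relations in the first two blocks (the relations $[yx^iy]=0$ and $[v_1yx^iy]=0$); for $k\ge 2$ nothing in the presentation controls $[N_jy]$ in the interior of a block, and this is not a routine Jacobi-identity consequence. The paper supplies it via Theorem~\ref{thm:chain_recap}, an adaptation of a result of~\cite{AviMat:diamond_distances} that ultimately rests on the theory of graded Lie algebras of maximal class. Without this ingredient your inductive loop does not close.

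Two further points. The inequality you aim for should be $\dim (N/Z(N))_i\le\dim L^{\ast}_i$, not $\dim N_i\le\dim L^{\ast}_i$: the computations naturally show that certain elements (for instance $[v_{a+k}yx]+[v_{a+k}xy]$, or $[v_{a+1}x^2]$ when the preceding finite type is $1$) are merely central in $N$ rather than zero, which is precisely why the theorem is stated for $N/Z(N)$. And you correctly flag, but do not carry out, the type determinations at the end of each block; in the paper these require expanding $\lambda [v_{a-1}[v_{p^s+1}yx]]=(1-\lambda)[v_{a-1}[v_{p^s+1}xy]]$, together with a separate identity involving $v_{a-p^s}$ to handle the case where the preceding finite type equals $1$, and they constitute the computational heart of the proof.
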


Note in passing that the presentation in
Theorem~\ref{thm:fin_pres} does not include relations
$[v_{k}yx]+[v_{k}xy]=0$ for $k$ odd in the range $2<k\le p^s$,
because those are consequences of the remaining
relations, as we will see in its proof.

Naturally, we will prove Theorem~\ref{thm:fin_pres}
by induction, deducing homogeneous relations in each degree $j$
from those already proved in lower degree.
Hence, in essence, we will deduce relations in degree $j$
from already established properties of $N/N^j$.
In doing so we can make use of certain arguments
in~\cite{AviMat:diamond_distances}, as long as they
do not assume $N/N^{j+1}Z(N)$ or even larger quotients
of $N/Z(N)$ to be thin, which is something we actually need to
prove here.
Thus, we extract from~\cite{AviMat:diamond_distances}
a result, adapted to our present setting,
which follows from those arguments.
It is an adapted version
of~\cite[Theorem 10]{AviMat:diamond_distances}.


\begin{theorem}
\label{thm:chain_recap}
Let $N=\bigoplus_{i=1}^{\infty}N_{i}$ be
a graded Lie algebra, generated by two elements $x$ and $y$ of $N_1$.

Suppose its quotient $M=N/N^{m+2}Z(N)$
is a Nottingham  algebra with second diamond $M_q$ and standard generators the images of $x$ and $y$,
where $m\geq 2q-1$.

Suppose $M_{m}$ is a (possibly fake) diamond of $M$,
of type $\mu$.
If $\mu$ equals $-1$ or $0$, assume in addition that $M_{m-q+1}$ is a diamond with a type $\lambda$,
and in case $\mu=0$ assume
$\lambda\neq 0$.


Then $[N_iy]\subseteq Z(N)$ for $m< i\le m+q-3$.
\end{theorem}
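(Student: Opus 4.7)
The result is an adaptation of \cite[Theorem~10]{AviMat:diamond_distances} to our setting, in which $N$ is not assumed to be thin above degree $m+1$; accordingly, the strategy is to transcribe the argument of that reference, weakening equalities to equalities modulo $Z(N)$. I would proceed by induction on $i$ in the range $m<i\le m+q-3$, proving $[N_iy]\subseteq Z(N)$ at each stage. Since an element of $N_{i+1}$ is central iff it is killed by both generators $x$ and $y$, the inductive claim reduces to proving $[N_iyx]=0$ and $[N_iyy]=0$ in $N$, the nontrivial content being that these actually vanish, not merely lie in $Z(N)+N^{m+2}$ (which is all of $N_{i+2}$ for $i+2\ge m+2$).

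I would begin by fixing a lift $v\in N_{m-1}$ of a generator of $M_{m-1}$. The hypothesis that $M_m$ is a (possibly fake) diamond of type $\mu$, together with the computational identities collected in Section~\ref{sec:calculations}, furnishes a list of explicit relations among $v$, $[vx]$, $[vy]$, $[vxx]$, $[vxy]$, $[vyx]$ and $[vyy]$ valid in $M$ and encoded by Definition~\ref{def:type} or Definition~\ref{def:type-fake}. As these live in degrees at most $m+1$ (where $N^{m+2}$ is irrelevant), they lift to relations in $N$ valid modulo $Z(N)\cap N_j$, that is, modulo central correction terms.

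For the propagation step I would express a spanning element of $N_i$ in the form $[vb_1\cdots b_{i-m+1}]$ with each $b_j\in\{x,y\}$ (plus a central correction), then expand $[[vb_1\cdots b_{i-m+1}]yz]$ for $z\in\{x,y\}$ by the generalized Jacobi identity, pushing $y$ leftward through the $b_j$. Each commutation either produces a strictly shorter bracket of the form $[[vb_1\cdots\widehat{b_j}\cdots b_{i-m+1}]yz']$, handled by the inductive hypothesis, or a bracket whose leftmost segment matches one of the diamond identities at $M_m$ and vanishes modulo $Z(N)$. In the sensitive cases $\mu\in\{-1,0\}$ the dominant coefficient of this expansion collapses; one then imports an additional relation from the preceding diamond $M_{m-q+1}$ of type $\lambda$, and the hypothesis $\lambda\neq 0$ when $\mu=0$ is precisely what ensures the replacement coefficient is nonzero.

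The main obstacle is bookkeeping: verifying that each Jacobi manipulation produces an identity genuinely valid in $N$, rather than one tacitly relying on thinness above degree $m+1$. The gap hypothesis $m\ge 2q-1$ keeps the diamonds $M_q$, $M_{m-q+1}$ and $M_m$ in structurally independent positions, so that no two of the relevant identities interfere, while the passage from equalities in $M$ to equalities in $N$ modulo $Z(N)$ is controlled degree by degree as above. The induction must stop at $i=m+q-3$ because pushing further would require information about the next diamond at degree $m+q-1$, which is the concern of subsequent results rather than the present one.
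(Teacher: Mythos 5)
The paper itself gives no proof of Theorem~\ref{thm:chain_recap}: it is imported, in adapted form, from \cite[Theorem~10]{AviMat:diamond_distances}, with the assertion that the arguments there go through without assuming thinness of $N/Z(N)$ above degree $m+1$. Your framing of what has to be shown is correct and matches that intent: the content is that $[N_iyx]$ and $[N_iyy]$ vanish in $N$ itself, in degrees where the quotient $M$ carries no information, and your reduction of centrality to annihilation by the two generators, together with the observation that relations of $M$ in degrees at most $m+1$ lift to congruences modulo $Z(N)\cap N_j$, is the right starting point.

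The gap is in the propagation step, which is both unexecuted and, as described, not a valid mechanism. Commuting $y$ past a generator $b_j$ does not produce ``a strictly shorter bracket of the form $[[vb_1\cdots\widehat{b_j}\cdots b_{i-m+1}]yz']$''; it produces a bracket containing the degree-two element $[b_jy]$, and for $b_j=x$ the resulting identity $[uxy]=[uyx]+[u[xy]]$ is circular, not a reduction. More fundamentally, no purely formal rearrangement of this kind can suffice, because it would propagate past $i=m+q-3$, whereas the conclusion is genuinely false at $i=m+q-2$: taking $N$ to be an infinite-dimensional Nottingham algebra (so $Z(N)=0$) in which $N_{m+q-1}$ is a genuine diamond, one has $[N_{m+q-2}y]\neq 0$. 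Any correct proof must therefore use the value of $q$ in an essential way --- through $(\ad x)^q=0$, through expansions against $v_1=[yx^{q-2}]$ and $[v_1x]$ as in Section~\ref{sec:calculations}, or through the constituent theory of graded Lie algebras of maximal class from \cite{CMN,CN} underlying the original chain result of \cite{CaJu:quotients} --- and none of these ingredients appears in your sketch. Similarly, the treatment of the delicate cases $\mu\in\{-1,0\}$, where the hypothesis on the preceding diamond $M_{m-q+1}$ and the condition $\lambda\neq 0$ enter, is pure assertion (``the dominant coefficient collapses \dots the replacement coefficient is nonzero'') with no indication of which expansion or coefficient is meant. As it stands the proposal is a plan to transcribe the proof of \cite[Theorem~10]{AviMat:diamond_distances} rather than a proof, and the one concrete mechanism it adds is not the correct one.
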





\begin{proof}[Proof of Theorem~\ref{thm:fin_pres}]
Set $L=N/Z(N)$.
We will prove, inductively, that $L$ is thin,
and has the claimed structure.
More precisely, we will show that for all $k\ge 1$ we have
\begin{align}
&[L_{(k-1)(q-1)+1+i}y]=0  & &\textrm{for $0< i < q-2$,}
\label{eq:chain}
\\
&[v_{k}xx]=0=[v_{k}yy],
\label{eq:xxyy}
&
\\
&[v_{k}yx]+[v_{k}xy]=0 &  &\textrm{if
$k\not\equiv 1\pmod{p^s}$,}
\label{eq:infinite}
\\
&\mu_r[v_{rp^s+1}yx]=(1-\mu_r)[v_{rp^s+1}xy] &  &
\textrm{where $\mu_r=r(\lambda+1)-1$,}
\label{eq:finite}
\\&
[v_{rp^s+1}y]=0  & &\textrm{if $\mu_r=1$,}
\label{eq:y}
\\&
[v_{rp^s+1}x]=0  & &\textrm{if $\mu_r=0$,}
\label{eq:x}
\end{align}
where $v_k$ denotes any nonzero element of $L_{k(q-1)}$.
We will naturally prove them by induction on the degree
of those equations.
There will be a main induction on $r$, then an induction on $0<k\le p^s$
to prove Equations~\eqref{eq:xxyy}
and~\eqref{eq:infinite} on the diamond types,
with Equations~\eqref{eq:finite}, \eqref{eq:y} and~\eqref{eq:x}
concerning the finite types,
and for each $k$ an induction on $i$ to prove Equation~\eqref{eq:chain}.

The presentation of $N$ tells us explicitly that the quotient
$N/N^{2q}$ is thin (note $N_{2q}=[v_2N_1N_1]$),
and thus a Nottingham algebra, with second diamond in degree $q$.
We start our proof with showing that
$L/L^{(p^s+1)(q-1)+3}$ is thin.
(It is actually the case that $N/N^{(p^s+1)(q-1)+3}$
itself is thin, but we do not need that here.)
Note $L_{(p^s+1)(q-1)+3}=[v_{p^s+1}L_1L_1L_1]$.
Thus, we show by induction that, in $L$, we have
\begin{align*}
&[v_{k-1}yx^iy]=0  &&\textrm{for $0< i < q-2$,}\\
&[v_kxx]=0=[v_kyy],
\end{align*}
for $2\le k\le p^s+1$.
For $k=2$ the first set of relations
is included in the presentation of $N$.
Because
\[
0=[v_1xyx^{q-4}[xyy]]=[v_2yy]
\]
and
\[
0=[v_1[v_1xx]]=[v_1[yx^q]]=[v_1yx^q]-[v_1x^qy]=[v_1yx^q]=[v_2xx],
\]
we have $[v_2yy]=0$ and $[v_2xx]=0$.
Hence $N/N^{2q+1}$ is thin, and has a a diamond in degree
$2q-1$, of infinite type as imposed
by the presentation of $N$.
Now let $2<k\le p^s+1$ and assume the conclusions hold
for all smaller values of $k$.
According to Theorem \ref{thm:chain_recap}
with $m=(k-1)(q-1)+1$, in $L$ we have
\[
[v_{k-1}yx^iy]=0\qquad\textrm{for $0< i < q-2$.}
\]
As before, because
\[
0=[v_{k-1}xyx^{q-4}[xyy]]=[v_kyy]
\]
and
\[
0=[v_{k-1}[v_1xx]]=[v_{k-1}[yx^q]]=[v_{k-1}yx^q]-[v_{k-1}x^qy]=[v_{k-1}yx^q]=[v_kxx]
\]
we find $[v_kyy]=0$,
and $[v_kxx]=0$.
Thus, $L/L^{k(q-1)+3}$ is thin, and has a diamond
in degree $k(q-1)+1$.
That diamond has infinite type if $k\le p^s$,
as imposed by the presentation of $N$ for even $k$,
and because of Proposition~\ref{prop:a_odd} for odd $k$,
and type $\lambda$ if $k=p^s+1$.
This completes our induction, and hence
$L/L^{(p^s+1)(q-1)+3}$ is thin, with the diamond
structure announced.

In the rest of the proof we will proceed inductively,
by successive spans of $p^s(q-1)$ in degree,
to prove $L$ is thin and has the diamond structure announced.
In particular, letting $a_r=rp^s+1$ for $r\geq 1$,
we will proceed by induction on $r$,
assuming
$\mu_r [v_{a_r}yx]=(1-\mu_r)[v_{a_r}xy]$ where $\mu_r=r(\lambda+1)-1$,
hence $[v_{a_r} x]$ and $[v_{a_r} y]$ span a diamond of type $\mu_r$,
preceded by  a string of $p^s-1$ diamonds of infinite type,
in the appropriate degrees.
Note that what we have proved so far
constitutes a virtual case $r=0$.
To simplify notation we will denote $a_r$ by $a$
(with a warning that its meaning is more general than how it was used
in Section~\ref{sec:proof_main}, the proof of Theorem~\ref{thm:main}).

In our description of $L$ at the beginning
of this proof we have called $v_k$ any
nonzero element of $L_{k(q-1)}$,
hence defined only up to a scalar multiple.
Now we refine this by making a definite choice of scalar.
Thus, define the element $v_{a+1}$ in degree $(a+1)(q-1)$ as
\[
v_{a+1}=\begin{cases}
[v_{a}xyx^{q-3}] &\mbox{if } \mu_r \neq 0, \\
[v_{a}yx^{q-2}] &\mbox{otherwise},
\end{cases}
\]
and, recursively, the elements
\[
v_{a+k}=[v_{a+k-1}xyx^{q-3}] \quad \textrm{for $1<k\le p^s$.}
\]

We now prove that every homogeneous component of degree $(a+k)(q-1)+1$
in $L$ is a diamond of infinite type,
for $1\le k<p^{s}$, and the homogeneous component of degree
$(a+p^s)(q-1)+1$ is a diamond of (finite) type
$\mu_{r+1}=r\lambda+r+\lambda$.
Therefore, we prove
\begin{align*}
&[L_{(a+k-1)(q-1)+1+i}y]=0
&&\textrm{for $1\leq k \leq p^s$ and $0<i<q-2$,}
\\
&[v_{a+k}xx]=0=[v_{a+k}yy] & &\textrm{for $1\leq k \leq p^s$,}  \\
&[v_{a+k}yx]+[v_{a+k}xy]=0 & &\textrm{for $1\leq k \leq p^s-1$,} \\
&\mu_{r+1}[v_{a+p^s}yx]=(1-\mu_{r+1})[v_{a+p^s}xy], & &
\end{align*}
in $L$.
In our overall induction on $r$ we will prove
the induction base at the same time as the induction step,
relying on a virtual case $r=0$ which
essentially amounts to what we have proved so far.

We now proceed by nested inductions on $k$ and $i$.
Thus, let $1\le k\le p^s$ and assume the conclusions hold
for all smaller values of $k$.
(This means none when $k=1$, where, however,
the required information is available from the previous part of the proof.)
According to Theorem \ref{thm:chain_recap}
with $m=(a+k-1)(q-1)+1$, in $L$ we have
\[
[L_{(a+k-1)(q-1)+1+i}y]=0\qquad\textrm{for $0< i < q-2$.}
\]
Letting $u$ a homogeneous element with $[ux]=v_{a+k}$,
because $[uy]=0$ we obtain
\[
0=[u[xyy]]=[v_{a+k}yy].
\]
Unless $k=1$ and $\mu_r=1$, we obtain
\[
0=[v_{{a+k}-1}[yx^q]]
=[v_{{a+k}-1}yx^q]-[v_{{a+k}-1}x^qy]
=[v_{a+k}xx].
\]
We will deal with the excluded case below,
where $[v_{a+1}xx]$ will turn out to only be central in $N$.

Now we complete the case $k=1$ of our induction by proving $[v_{a+1}yx]+[v_{a+1}xy]=0$ in $L$.
If $\mu_r \neq 1$ we may proceed as in Subsection~\ref{subsec:a_not_equiv_1}.
Thus, the equation $[v_{a-1}x[v_{2}yx]]+[v_{a-1}x[v_{2}xy]]=0$ yields
$[v_{a+1}yx^2]+[v_{a+1}xyx]=0$.
Consequently, the element $[v_{a+1}yx]+[v_{a+1}xy]$ is central
in $N$ and the conclusion follows.
If $\mu_r=1$, then the equation $[v_{a-1}x[v_{2}yx]]+[v_{a-1}x[v_{2}xy]]=0$ yields
$[v_{a+1}xxy]+[v_{a+1}yx^2]+[v_{a+1}xyx]=0$.
Furthermore, we obtain $0=[v_{a}x[yx^q]]=[v_{a+1}x^3]$.
Consequently, once we prove $[v_{a+1}yx]+[v_{a+1}xy]=0$
it will also follow that the element $[v_{a+1}x^2]$ is central in $N$,
as we announced above.
Let $v_{a-p^s}$ be a nontrivial element in degree $(a-p^s)(q-1)$, then $v_{a-p^s}$ is just above a
diamond of (finite) type $\mu_{r-1}=\mu_r-(\lambda+1)=-\lambda$.
We expand both terms
of the equation
\begin{equation}\label{eq:v_a-p^s}
\lambda[v_{a-p^s}[v_{p^s+1}yx]]=(1-\lambda)[v_{a-p^s}[v_{p^s+1}xy]],
\end{equation}
by means of Lemmas~\ref{lemma:v2} and~\ref{lemma:v2ext}.
The Lie bracket on the left-hand side is
\begin{align*}
[v_{a-p^s}[v_{p^s+1}yx]]
&=
[v_{a-p^s}v_{p^s+1}yx]-[v_{a-p^s}yv_{p^s+1}x]
\\&\quad
-[v_{a-p^s}xv_{p^s+1}y]+[v_{a-p^s}xyv_{p^s+1}]
\\&=
\lambda^{-1}[v_{a+1}yx]+[v_{a+1}xy]
\end{align*}
because $[v_{a-p^s}v_{p^s+1}]=-2v_{a+1}$,
$[v_{a-p^s}yv_{p^s+1}]=-(\lambda^{-1}+1)[v_{a+1}y]$,
$[v_{a-p^s}xv_{p^s+1}]=-[v_{a+1}x]$
and $[v_{a-p^s}xyv_{p^s+1}]=[v_{a+1}yx]$.
The Lie product at the right-hand side is
\begin{align*}
[v_{a-p^s}[v_{p^s+1}xy]]
&=
[v_{a-p^s}v_{p^s+1}xy]-[v_{a-p^s}xv_{p^s+1}y]
\\&\quad
-[v_{a-p^s}yv_{p^s+1}x]+[v_{a-p^s}yxv_{p^s+1}]
-[v_{a+1}xy],
\end{align*}
where we have used $[v_{a-p^s}yx]=-(\lambda^{-1}+1)[v_{a-p^s}xy]$ due to this diamond's type.
Substituting in Equation~\eqref{eq:v_a-p^s} we get $[v_{a+1}yx]=-[v_{a+1}xy]$, as desired.

We continue our induction on $k$ and prove
$[v_{a+k}yx]+[v_{a+k}xy]=0$ in $L$
for $1<k\le p^s-1$.
Because $[v_{k+1}yx]+[v_{k+1}xy]=0$, expanding the equation $[v_{a-1}[v_{k+1}yx]]+[v_{a-1}[v_{k+1}xy]]=0$
as in Subsection~\ref{subsec:a-1_power_p} yields $[v_{a+k}yx]+[v_{a+k}xy]=0$ when $k \not \equiv 1 \pmod{p}$.
The argument used in Subsection~\ref{subsec:a-1_power_p} to cover the case $k \equiv 1 \pmod{p}$ works in the
present setting as well, unless $k+p^t=p^s+1$ for some $t$ with $1\leq t \leq s-1$.
If $k=p^s-p^t+1$ then we use the equation
\begin{equation}\label{eq:v_a+k_infty}
[v_{a-1}x[v_{k+1}yx]]+[v_{a-1}x[v_{k+1}xy]]=0.
\end{equation}
The former term in the sum is $[v_{a-1}x[v_{k+1}yx]]=[v_{a-1}xv_{k+1}yx]-[v_{a-1}xyv_{k+1}x]$.
The latter term is
$[v_{a-1}x[v_{k+1}xy]]=[v_{a-1}xv_{k+1}xy]-[v_{a-1}xyv_{k+1}x]+[v_{a-1}xyxv_{k+1}]$.
Assume first $\mu_r \neq 0$, and recall $[v_{a-1}xv_{k+1}]=-\mu_{r}^{-1}[v_{a+k}x]$
from Subsection~\ref{subsec:a-1_power_p}.
Taking into account that $k$ is odd we find
\begin{align*}
[v_{a-1}xyv_{k+1}]
&=
[v_{a-1}xy[v_{2}v_{1}^{k-1}]]=[v_{a-1}xyv_{2}v_{1}^{k-1}]+[v_{a-1}xyv_{1}^{k-1}v_{2}]
\\&=
-2\mu_{r}^{-1}[v_{a+k-1}xyv_{1}]-(2\mu_{r}^{-1}-1)[v_{a+k-2}xyv_{2}]
\\&=
(2\mu_{r}^{-1}+1)[v_{a+k}yx]+[v_{a+k}xy],
\end{align*}
and we deduce
$[v_{a-1}x[v_{k+1}yx]]=-(2\mu_{r}^{-1}+1)[v_{a+k}yx^{2}]-(\mu_{r}^{-1}+1)[v_{a+k}xyx]$.
Since
\begin{align*}
[v_{a-1}xyxv_{k+1}]
&=
[v_{a-1}xyx[v_{2}v_{1}^{k-1}]]=[v_{a-1}xyxv_{2}v_{1}^{k-1}]+[v_{a-1}xyxv_{1}^{k-1}v_{2}]
\\&=
-3\mu_{r}^{-1}[v_{a+k-1}xyxv_{1}]-(3\mu_{r}^{-1}-1)[v_{a+k-2}xyv_{2}]
\\&=
(3\mu_{r}^{-1}+2)[v_{a+k}yx^{2}]+2[v_{a+k}xyx],
\end{align*}
we have $[v_{a-1}x[v_{k+1}xy]]=(\mu_{r}^{-1}+1)[v_{a+k}yx^{2}]+[v_{a+k}xyx]$.
We conclude
\[
\mu_{r}^{-1}\bigl([v_{a+k}yx^{2}]+[v_{a+k}xyx]\bigr)=0,
\]
and so the element $[v_{a+k}yx]+[v_{a+k}xy]$ is central in $N$.
In the excluded case $\mu_{r}=0$, Equation~\eqref{eq:v_a+k_infty} yields $[v_{a+k}yx^{2}]+[v_{a+k}xyx]=0$,
because $[v_{a-1}xv_{k+1}]=-[v_{a+k}x]$,
$[v_{a-1}xyv_{k+1}]=2[v_{a+k}yx]$,
and $[v_{a-1}xyxv_{k+1}]=3[v_{a+k}yx^2]$.

Thus, we have proved that each homogeneous component of $L$
of degree $(a+k)(q-1)+1$ is a diamond of infinite type, for $1\le k\le p^{s}-1$.
To complete the proof we show
\[
\mu_{r+1}[v_{a+p^s}yx]=(1-\mu_{r+1})[v_{a+p^s}xy],
\]
in $N$.
To this purpose we expand both sides of the equation
\begin{equation}\label{eq:v_a+p^s}
\lambda [v_{a-1}[v_{p^s+1}yx]]=(1-\lambda)[v_{a-1}[v_{p^s+1}xy]],
\end{equation}
and assume first $\mu_r \neq 0$.
When $k=p^s$ the calculations we did in the previous paragraph for $k$ odd yield $[v_{a-1}xv_{p^s+1}]=-\mu_{r}^{-1}[v_{a+p^s}x]$
and $[v_{a-1}xyv_{p^s+1}]=(2\mu_{r}^{-1}+1)[v_{a+p^s}yx]+[v_{a+p^s}xy]$.
Recalling the equations $[v_{a-1}v_{p^s+1}]=-\mu_{r}^{-1}v_{a+p^s}$ and
$[v_{a-1}yv_{p^s+1}]=-\mu_{r}^{-1}[v_{a+p^s}y]$
from Subsection~\ref{subsec:a-1_power_p}, we find
\[
\lambda\mu_{r} [v_{a-1}[v_{p^s+1}yx]]=\lambda(2+\mu_{r})[v_{a+p^s}yx]+\lambda(1+\mu_{r})[v_{a+p^s}xy],
\]
and
\[
(1-\lambda)\mu_{r} [v_{a-1}[v_{p^s+1}xy]]=(\lambda-1)(1+\mu_{r})[v_{a+p^s}yx]+(\lambda-1)\mu_{r}[v_{a+p^s}xy].
\]
Substituting in Equation~\eqref{eq:v_a+p^s} and multiplying both sides by $\mu_r$ we obtain
\[
(\mu_{r}+\lambda+1)[v_{a+p^s}yx]=-(\mu_{r}+\lambda)[v_{a+p^s}xy],
\]
as desired.
Now we expand Equation~\eqref{eq:v_a+p^s}
assuming $\mu_{r}=0$, whence $\mu_{r+1}=\lambda+1$.
The above calculations for $k$ odd (and $\mu_{r}=0$) give $[v_{a-1}xv_{p^s+1}]=-[v_{a+p^s}x]$ and
$[v_{a-1}xyv_{p^s+1}]=2[v_{a+p^s}yx]$ when $k=p^s$.
Recalling the equations $[v_{a-1}v_{p^s+1}]=-v_{a+p^s}$ and
$[v_{a-1}yv_{p^s+1}]=-[v_{a+p^s}y]$
 from Subsection~\ref{subsec:a-1_power_p}, we find
\[
\lambda[v_{a-1}[v_{p^s+1}yx]]=2\lambda[v_{a+p^s}yx]+\lambda[v_{a+p^s}xy]
\]
and
\[
(1-\lambda)[v_{a-1}[v_{p^s+1}xy]]=(\lambda-1)[v_{a+p^s}yx].
\]
We obtain $(\lambda+1)[v_{a+p^s}yx]=-\lambda[v_{a+p^s}xy]$, thus concluding the proof.
\end{proof}

\begin{rem}\label{rem:lambda=0}
The conclusion of Theorem~\ref{thm:fin_pres} extends to the case $\lambda=0$ excluded once the additional relation
$[v_{2p^s+2}xx]=0$ is included, at the expense of some additional calculations, which we now outline.
The proof of Theorem~\ref{thm:fin_pres} only fails to show that if $v_{a}$ is just above a diamond of type
$\mu_{r}=1$ then $v_{a+1}$ lies just above a diamond of infinite type.
As in the general case, one has
$[v_{a+1}xxy]+[v_{a+1}yx^2]+[v_{a+1}xyx]=0=[v_{a+1}x^3]$.
However, expanding Equation~\eqref{eq:v_a-p^s} is inconclusive when $\lambda=0$.
To get around this, at the first occurrence of a diamond of type $\mu_{r}=1$, which occurs for $a=2p^s+1$,
the additional relation $[v_{2p^s+2}xx]=0$ allows one to conclude.
For $a>2p^s+1$ one uses the additional relation to expand $[v_{a-2p^s-1}y[v_{2p^s+2}xx]]=0$.
\end{rem}

\bibliography{References}

\end{document}